\newtheorem{thm}{Theorem}[section]
\newtheorem{coro}[thm]{Corollary}
\newtheorem{definition}[thm]{Definition}%'def'不能使用，因为已经被系统使用过了。
\newtheorem{lemma}[thm]{Lemma}
\newtheorem{prop}[thm]{Proposition}
\newtheorem{remark}[thm]{Remark}
\numberwithin{equation}{section}
\newcommand{\ep}{\epsilon}
\newcommand{\al}{\alpha}
\newcommand{\pa}{\partial}
\newcommand{\ue}{u^\epsilon}
\newcommand{\uek}{u^\epsilon_k}
\newcommand{\vek}{v^\epsilon_k}
\newcommand{\fek}{f^\epsilon_k}
\newcommand{\gek}{\gamma^\ep_k}
\newcommand{\wek}{w^{\epsilon,k}}
\newcommand{\bube}{\bar{U}^\epsilon}
\newcommand{\bvbe}{\bar{V}^\epsilon}
\newcommand{\xe}{\xi^\epsilon}
\newcommand{\intot}{\int_{0}^{t}}
\newcommand{\intobtxl}{\int_{0}^{T}\int_{|x|<1}}
\newcommand{\intobtxg}{\int_{0}^{T}\int_{|x|\geq 1}}
\newcommand{\intos}{\int_{0}^{s}}
\newcommand{\intotxl}{\int_{0}^{t}\int_{|x|<1}}
\newcommand{\intotxg}{\int_{0}^{t}\int_{|x|\geq 1}}
\newcommand{\supt}{\sup\limits_{0\leq t\leq T}}
\newcommand{\supe}{\sup\limits_{0<\epsilon\leq 1}}
\newcommand{\ube}{U^\epsilon}
\newcommand{\vbe}{V^\epsilon}
\newcommand{\re}{\rho^\epsilon}
\title{\textbf{On the small mass limit of stochastic wave equation driven by cylindrical stable process }\thanks{This work is supported by a NSFC Grant No.  12371243.}}
\author{Qingming Zhao\thanks{qingming.zhao@smail.nju.edu.cn}}
\author{Xueru Liu \thanks{dg21210009@smail.nju.edu.cn}}
\author{Wei Wang \thanks{Corresponding author: wangweinju@nju.edu.cn}}
\affil{School of Mathematics, Nanjing University, Nanjing 210093, P. R. China}
\date{} % 去掉日期
\begin{document}
	\maketitle

		%\begin{CJK*}{GBK}{song}
		%  (title)
		%\pagecolor{blue}
	
		%\thispagestyle{first}
		\noindent{\small{\hspace{1.1cm} }}\\
		%\begin{abstract}
		\\
		\noindent \textbf{Abstract~~~}   We explore the small mass limit of a stochastic wave equation (SWE) driven by cylindrical $\al$-stable noise, where $\alpha\in(1,2)$, and prove that it converges to a stochastic heat equation. We establish its well-posedness, and in particular, the c\`adl\`ag property, which is not trivial in the infinite dimensional case. Using a splitting technique, we decompose the velocity component into three parts, which gives convenience to the moment estimate. We show the tightness of solution of SWE by verifying the infinite dimensional version of Aldous condition. After these preparation, we pass the limit and derive the approximation equation.
		\\[2mm]
		\textbf{Keywords~~~}{Cylindrical stable process}, {Smoluchowski--Kramers approximation}, {Averaging}, {Singular pertubation}
		\\[2mm]
		\\
		\textbf{2020 Mathematics Subject Classification~~~}60G51, 60H15
		%\end{abstract}
		%\CenterWallPaper{1}{bg-a4-odd.jpg} %ŸÓÖÐµÄ±³Ÿ°ÍŒÆ¬£¬³öÏÖÔÚÎÄµµµÄÃ¿Ò»Ò³ÉÏ
		
		\section{Introduction}\label{introduction}
		 In this paper, we consider the small mass limit $\ep\to0$ of the following stochastic wave equation driven by a cylindrical stable L\'{e}vy process 
		\begin{equation}
			\begin{cases}
				\ep\ube_{tt}+\ube_t=\Delta \ube+f(\ube)+\ep^\theta\dot{L},\\
				\ube(0)=u_0,\enspace\ube_t(0)=v_0,\enspace\ube|_{\pa D}=0.
			\end{cases}\label{wave}
		\end{equation}
		Here, $0\leq\theta<1$ is a constant, and $L$ is a cylindrical $\al$-stable process whose properties are detailed in Section~\ref{notionsandassumptions}.
		
		Formally, as $\ep\to 0$, the term $\ep\ube_{tt}$ in equations (\ref{wave}) vanishes, so that the approximation equation should be a stochastic heat equation, that is
		\begin{equation}\label{limitequation}
			\bube_t=\Delta\bar{\ube}+f(\bube)+\ep^\theta\dot{L},\enspace \bube(0)=u_0,\enspace \bube|_{\pa D}=0.
		\end{equation}
		Note that when $\theta=0,$ (\ref{limitequation}) does not depend on $\ep,$ and we write $\bar{U}$ instead of $\bube.$ The limit behavior of (\ref{wave}) as $\ep\to0$ in the case $\theta=0$ is known as Smoluchowski--Kramers (SK) approximation, which is proposed by Smoluchowski\cite{SMOL16} and Kramers\cite{KRAM40}. The SK approximation with cylindrical Wiener process is researched by Cerrai and Freidlin \cite{CF06A,CF06B}.  For more research about this topic, we refer to \cite{SWW24,chaos24, HMV15, HOTT13}. 
		
		Most of the aforementioned research concern about the case when the noise is Gaussian. However, it is found that L\'evy noise plays important roles in many fields such as epidemics spreading \cite{epidemics} and financial modeling \cite{finance}. $\al$-stable L\'evy process is an important class of L\'evy process, which has nice scaling property and self-similarity \cite{SATO99,KALL97}. We refer to \cite{SATO99, TAQQU} for more details about finite dimensional stable process. There are also fruitful works about cylindrical $\al$-stable process emerging in recent years\cite{AR10, BOD23, KOS20, KR22, JR17, PZ11}. In particular, Jakubowski and Riedle\cite{JR17} define the stochastic integral with respect to cylindrical L\'evy process. Kosmala and Riedle \cite{KR22} study the mild solution of SPDE driven by cylindrical stable process. 
		
		However, it seems that there is no work concerned about singular perturbation of wave equation with cylindrical stable process. Zhang \cite{ZHAN08} considers the situation when the noise is a finite dimensional L\'evy process with finite L\'evy measure, and the author himself considers the finite dimensional $\al$-stable case in \cite{ZHAO24}. There is remarkable technical difficulty when the noise is cylindrical stable noise. For example, the c\`adl\`ag property of the solution does not hold under classical hypothesis, see e.g. \cite{PZ10}. Besides, the second-order moment and the L\'evy measure of stable process are both infinite. Moreover, since the noise is non-Gaussian, the Garcia-Rademich-Rumsey theorem is invalid. It is also worth mentioning that the assumption (A1) of \cite[Page 295]{KR22} is not satisfied for our equations (\ref{wave}), so we have to give another definition of solution, which is analogous to the martingale solution in the Brownian case \cite[Definition 2.1]{LLB}.
		
		Rewrite the equation (\ref{wave}) as
		\begin{equation}
			\begin{cases}
				\ube_{t}=\vbe,\\
				\vbe_t=\ep^{-1}[-\vbe+\Delta\ube+f(\ube)]+\ep^{\theta-1}\dot{L},\\
				\ube(0)=u_0,\enspace  \vbe(0)=v_0,\enspace \ube|_{\pa D}=0.
			\end{cases}\label{slowfast}
		\end{equation}
		Equations (\ref{slowfast}) has a form of slow-fast system \cite{DW14}, where $\vbe$ can be viewed as a fast component, and $\ube$ can be seen as a slow component. Inspired by \cite{LRW11}, we split the fast component as
		\begin{equation}
			\begin{cases}
				\bvbe_{1,t}(t)=-\ep^{-1}\bvbe_1(t), \\
				\bvbe_{2,t}(t)=-\ep^{-1}[\bvbe_2(t)-\Delta\ube(t)-f(\ube(t))],\\
				\bvbe_{3,t}(t)=-\ep^{-1}\bvbe_3(t)+\ep^{-\frac{1}{\al}}L(t),\\
				\bvbe_1(0)=\ep v_0, \enspace \bvbe_2(0)=0, \enspace \bvbe_3(0)=0.
			\end{cases}\label{bveq}
		\end{equation}
It is straightforward to check that
		\begin{equation}
			\vbe=\ep^{-1}\bvbe_1+\bvbe_2+\ep^{\theta+\frac{1}{\al}-1}\bvbe_3. \label{decomposition}
		\end{equation}
		The splitting makes the analysis to (\ref{slowfast}) much more clear.
		
		The paper is organized as follows. In Section \ref{notionsandassumptions}, we introduce some notions and notations as well as impose some assumptions, after which we state the main theorem. In Section \ref{wellposed}, we give definition of the stochastic heat equation and the stochastic wave equation, and then establish the well-posedness of them, whose proof is postponed in Appendix \ref{appen}. In Section \ref{momentestimate}, we prove the uniform boundedness of $\ube$, which is fundamental to establish the tightness in Section \ref{tightnessofu}. With these preparation, we pass the limit and obtain the approximation equation in Section \ref{proofofthemainresult}.
		\section{Notions and Assumptions}\label{notionsandassumptions}
		Let $D$ be a bounded open subset of $\mathbb{R}^3$ with sufficiently smooth boundary. Denote $(\cdot,\cdot)$ and $||\cdot||$ the standard inner product and the norm on the Lebesgue space $H:=L^2(D)$, respectively. Define the operator $A$ by $Au:=-\Delta u$, where $u\in Dom(A):=\{u\in L^2(D), \Delta u\in L^2(D), u|_{\pa D}=0\}.$ Denote $(\al_i)_{i\in\mathbb{N}}$ and $(e_i)_{i\in\mathbb{N}}$ the eigenvalues and corresponding eigenvectors of $A$, satisfying that $0<\al_1\leq\al_2\leq...\uparrow\infty.$ For each $s>0,$ define $H^s_0(D)$ to be the closure of $C_0^\infty(D)$ (the set consists of all infinitely differentiable functions on $D$) with respect to the norm $$||u||_s^2:=\sum\limits_{k\in\mathbb{N}}\al_k^s(u,e_k)^2,$$ 
		and we denote $H^{-s}(D)$ as the dual space of $H_0^s(D)$ with dual norm $||\cdot||_{-s}.$
		
		Let us recall some concepts concerned about L\'{e}vy process and $\al$-stable law, which can be found in \cite[Chapter 4]{peszat2007stochastic}. If $L$ is a L\'{e}vy process taking values in a separable Hilbert space $H$, then there exists a real valued function $\psi$ defined on $H$, called the L\'{e}vy exponent of $L$, such that for all $h\in H,$
		$$\mathbb{E}e^{i(L(t),h)}=e^{-t\psi(h)}.$$ A real symmetric $\al$-stable random variable $X$ is a random variable with characteristic function$$\mathbb{E}e^{ihX}=e^{-\sigma^\al|h|^\al},\enspace h\in\mathbb{R},$$ where $\sigma>0$ is a constant. When $\sigma=1,$ it is called a normalized $\al$-stable random variable. A real normalized $\al$-stable L\'{e}vy process $L$ is a L\'{e}vy process such that $L(1)$ is a real normalized $\al$-stable random variable. It is obvious that in this case, the L\'{e}vy exponent of $L$ is $$\psi(h)=|h|^\al,\enspace h\in\mathbb{R}.$$
		
		Now we list some basic concepts of the Skorokhod space $\mathbb{D}$. Let $V$ be a separable Banach space. The Skorokhod space $\mathbb{D}([0,T];V)$ consists of all $V$-valued functions on $[0,T].$ For $x,y\in\mathbb{D}([0,T];V)$, set $$d^o(x,y):=\inf\limits_{\lambda\in\Lambda}\max\{||\lambda||^o, ||x-y\circ \lambda||\},$$
		where $||\lambda||^o:=\sup\limits_{0\leq s\leq t}\Big|\log\frac{\lambda(t)-\lambda(s)}{t-s}\Big|,$ and $\Lambda$ consists of all strictly increasing continuous bijection on $[0,T].$ It is well known that $d^o$ defines a complete separable metric on $\mathbb{D}([0,T];V)$. For more properties of Skorokhod space, we refer to \cite[Chapter 3]{BIL13}.
		
		Let $L$ be a normalized cylindrical $\al$-stable process on $(\Omega,\mathcal{F}, (\mathcal{F}_t)_{0\leq t\leq T}, \mathbb{P})$ for some $\al\in(1,2)$ with 
		\begin{equation}\label{cylinderical}
		L(t)=\sum\limits_{k=1}^{\infty}\lambda_k L_k(t)e_k,
		\end{equation}
		where $\lambda_k>0$ for each $k\in\mathbb{N},$ and  $(L_k)_{k\in\mathbb{N}}$ is a sequence of independent real normalized $\al$-stable L\'evy process.
		
		We make the following assumptions.
		
		\noindent($\mathbf{A_1}$) For some $\gamma>0$ with $\sum\limits_{k\in\mathbb{N}}\al_k^{1/2}\frac{1}{k^{1+\gamma}}<\infty$, 
		$$\sum\limits_{k\in\mathbb{N}}(\al_k^{3/2}\lambda_k+\al_k^{1/2}k^{1+\gamma}\lambda_k^2)<\infty.$$
		 %$$\sum\limits_{k\in\mathbb{N}}(\al_k^{1/2}\lambda_k+k^{1+\gamma}\lambda_k^2)<\infty,$$ 
		
		\noindent($\mathbf{A_2}$) $f:H\to H$ is globally Lipschitz, that is, there exists a constant $L$ such that for all $x,y\in H$, $$||f(x)-f(y)||\leq L||x-y||.$$
		
		\noindent($\mathbf{A_3}$) $f:H\to H$ satisfies the following growth condition: there exist a constant $C$ and $0<\delta<\al,$ such that for all $x\in H$,
		$$||f(x)||\leq C(1+||x||^{\delta/2}).$$
		
		\noindent($\mathbf{A_4}$) $u_0\in H_0^1(D),$ $v_0\in L^2(D).$ 
		
		\begin{remark}
			(i) It is well known that $\al_k=\mathcal{O}(k^{2/d}).$ Therefore, when $d=3$, $$\sum\limits_{k\in\mathbb{N}}\al_k^{1/2}\frac{1}{k^{1+\gamma}}<\infty$$ for each $\gamma>\frac{1}{3}.$
			
			(ii) By ($\mathbf{A_1}$), the series in (\ref{cylinderical}) converges almost surely.
		\end{remark}
		In this paper, we use the notation $x\lesssim y$ to indicate that there exists a constant $C$ whose value may change from line to line such that $x\leq Cy.$ Unless otherwise stated, $C$ never depends on $\ep$ nor the component index $k$. For a real number $x,$ $x^+:=\max\{x,0\}$ and $x^-:=\max\{-x,0\}$. Let $\xi$ and $\eta$ be two random elements, defined not necessarily on the same probability space. We use $\mathcal{L}(\xi)$ to denote the distribution of $\xi$, and write $\xi\overset{d}{=}\eta$ to imply that they have the same distribution. For two stochastic processes $X$ and $Y$ on the same probability basis, we say that $X$ and $Y$ are indistinguishable if there is a negligible set (i.e., a measurable set with zero measure), outside of which $X(t)=Y(t)$ for all $t$. 		
		Our main result is the following theorem.
		
		\begin{thm}\label{mainresult}
		(i) When $\theta=0$,
		$$\lim\limits_{\ep\to0}d^o(\ube,\bar{U})=0\quad\text{in probability}.$$
		
		(ii) When $\theta>0$, there exists a constant $C$ such that for each $\ep>0,$
		$$\mathbb{E}\supt||\ube(t)-\bube(t)||_{-2}\leq C\ep^\theta.$$
		\end{thm}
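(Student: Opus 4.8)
The plan is to prove the two assertions separately, treating part (ii) first since it sets up the quantitative machinery that part (i) will exploit in a softer, topological form. For part (ii), write $\re := \ube - \bube$ and subtract the two defining equations. Using the decomposition \eqref{decomposition}, the point is that $\ube$ satisfies, in the appropriate weak sense, $\ube_t = \vbe = \ep^{-1}\bvbe_1 + \bvbe_2 + \ep^{\theta+\frac{1}{\al}-1}\bvbe_3$, while $\bube_t = \Delta\bube + f(\bube) + \ep^\theta\dot L$. The key observation is that $\bvbe_2$ is, up to an error term governed by $\ep\,\partial_t\bvbe_2$, equal to $\Delta\ube + f(\ube)$; indeed from \eqref{bveq} we have $\Delta\ube + f(\ube) - \bvbe_2 = \ep\,\bvbe_{2,t}$. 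Likewise $\ep^{-1}\bvbe_1(t) = v_0 e^{-t/\ep}$ is an explicit transient that is small in a time-integrated sense, and $\ep^{\theta+\frac{1}{\al}-1}\bvbe_3$ is designed so that $\ep^{\theta+\frac{1}{\al}-1}\bvbe_{3,t} = -\ep^{\theta+\frac{1}{\al}-2}\bvbe_3 + \ep^{\theta-1}\dot L$, i.e. it carries the noise. So the difference equation for $\re$ in $H^{-2}(D)$ has the schematic form
\begin{equation}
\re_t = \Delta\re + \big(f(\ube)-f(\bube)\big) - \ep\,\partial_t\bvbe_2 + \ep^{-1}\bvbe_1 + \big(\ep^{\theta+\frac1\al-1}\bvbe_3 - \text{(the part already absorbed)}\big) + \big(\text{noise already matched}\big),
\end{equation}
where the noise terms cancel by construction and the remaining forcing is $\mathcal O(\ep^\theta)$ in a suitable norm after integrating the fast transients.

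Concretely, I would work in $H^{-2}(D)$ because $\Delta$ maps $H_0^1$ boundedly to $H^{-1}\subset H^{-2}$ and, crucially, $\bvbe_2$ and $\ep\,\partial_t\bvbe_2$ can be controlled there: differentiating the $\bvbe_2$ equation and estimating in $H^{-2}$ turns the awkward factor $\Delta\ube$ (only an $H^{-1}$ object) into something manageable, and the uniform bound on $\ube$ from Section~\ref{momentestimate} feeds in here. Test the $\re$-equation against the $H^{-2}$ duality (equivalently apply $A^{-1}$ twice and take the $H$-inner product with $\re$), use $\frac{d}{dt}\|\re\|_{-2}^2 = 2\langle \re_t, \re\rangle_{-2}$, bound $\langle \Delta\re,\re\rangle_{-2} \le 0$ (it is $-\|\re\|_{-1}^2 \le 0$ after the shift, or at worst $\lesssim \|\re\|_{-2}^2$), bound $\langle f(\ube)-f(\bube),\re\rangle_{-2} \lesssim \|\re\|_{-2}^2$ by ($\mathbf A_2$), and collect the forcing terms. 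For the forcing: $\int_0^t \ep^{-1}\|\bvbe_1(s)\|_{-2}\,ds = \int_0^t \|v_0\|_{-2} e^{-s/\ep}\,ds \lesssim \ep\|v_0\|_{-2}$; the term $\ep\int_0^t\|\partial_t\bvbe_2(s)\|_{-2}\,ds$ is $\mathcal O(\ep)$ using the Duhamel form $\bvbe_2(t) = \int_0^t \ep^{-1}e^{-(t-s)/\ep}[\Delta\ube(s)+f(\ube(s))]\,ds$ and the uniform moment bounds; and the residual $\ep^{\theta+\frac1\al-1}\bvbe_3$ contributes the dominant $\mathcal O(\ep^\theta)$ after one notes $\bvbe_3(t) = \int_0^t e^{-(t-s)/\ep}\ep^{-1/\al}\,dL(s)$ has $\mathbb E\sup_t\|\bvbe_3(t)\|_{-2} \lesssim \ep^{1/\al - \text{something}}$ — the precise exponent must be computed so that $\ep^{\theta+\frac1\al-1}\cdot(\text{that bound}) = \mathcal O(\ep^\theta)$, which is exactly why the splitting was chosen. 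Then Gr\"onwall's inequality in the integrated form $\mathbb E\sup_{s\le t}\|\re(s)\|_{-2} \le C\ep^\theta + C\int_0^t \mathbb E\sup_{r\le s}\|\re(r)\|_{-2}\,ds$ closes the estimate with a constant independent of $\ep$.

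For part (i), when $\theta=0$ the limit equation is the fixed equation for $\bar U$, and $d^o$-convergence in probability is weaker than the $L^\infty_tH^{-2}$ bound of part (ii) would give if that bound held with a positive power of $\ep$ — but at $\theta=0$ the forcing is $\mathcal O(1)$ in the wrong places (the $\ep^{-1}\bvbe_1$ transient and the noise residual no longer come with a gain), so a direct norm estimate fails and one genuinely needs tightness plus identification of the limit. Here I would invoke the tightness of $(\ube)_{\ep}$ in $\mathbb D([0,T];H^{-s})$ established in Section~\ref{tightnessofu} (via the infinite-dimensional Aldous condition), extract from any subsequence a further subsequence converging in law, pass to the limit in the weak/martingale formulation of \eqref{wave} — the term $\ep\ube_{tt}$ vanishes in distribution because $\ube_t = \vbe$ is bounded in a suitable negative Sobolev space uniformly in $\ep$ (again from the moment estimates and the splitting), so $\ep\vbe_t \to 0$ — and identify the limit as the unique solution $\bar U$ of the stochastic heat equation, whose well-posedness is Section~\ref{wellposed}. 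Uniqueness of the limit then upgrades convergence in law to convergence in probability by the standard Gy\"ongy--Krylov argument applied to the pair $(\ube,\bar U)$, and since $\bar U$ is a fixed (non-random in law only up to the common noise) object on the same space, one concludes $d^o(\ube,\bar U)\to 0$ in probability.

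The main obstacle, in both parts, is the treatment of the stochastic convolution $\bvbe_3$ and the $\ep$-power bookkeeping around it: because $L$ is cylindrical $\al$-stable the second moment is infinite, so one cannot use an $L^2$ isometry for $\bvbe_3$ and must instead work with $p$-th moments for $p<\al$ (or with the L\'evy–It\^o representation and a truncation of large jumps), and the regularization gained from the Ornstein–Uhlenbeck kernel $\ep^{-1}e^{-t/\ep}$ interacts delicately with the $\ep^{-1/\al}$ prefactor and the spatial smoothing needed to land in $H^{-2}$. Getting the exponent in $\mathbb E\sup_t\|\bvbe_3(t)\|_{-2}^{p}$ sharp enough that $\ep^{(\theta+\frac1\al-1)p}$ times it is $\mathcal O(\ep^{\theta p})$ — equivalently, that the $\ep$-scaling built into \eqref{decomposition} is exactly the right one — is the crux, and it is precisely where assumption ($\mathbf A_1$) on the decay of $\lambda_k$ and $\al_k$ is consumed. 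The secondary difficulty in part (i) is justifying the vanishing of $\ep\ube_{tt}$ in the limit without a second-moment bound on $\vbe_t$; this is handled by testing against smooth test functions and moving the $\partial_t$ onto them, reducing to the uniform bound $\mathbb E\sup_t\|\ep\vbe(t)\|_{-s} = \mathbb E\sup_t\|\bvbe_1 + \ep\bvbe_2 + \ep^{\theta+\frac1\al}\bvbe_3\|_{-s} \to 0$, which the splitting again makes transparent.
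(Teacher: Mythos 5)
Your proposal follows essentially the same route as the paper: for part (ii) you subtract the two weak formulations, use the splitting \eqref{bveq} to reduce the residual to the explicit transient $\ep^{-1}\bvbe_1=e^{-t/\ep}v_0$, an $O(\ep)$ contribution from $\bvbe_2$, and a noise term that cancels exactly against $\ep^\theta L$ leaving $-\ep^\theta(\ep^{1/\al}\bvbe_3(t),\psi)$, then close with Gr\"onwall on $\mathbb{E}\sup_t\|\cdot\|_{-2}$; for part (i) you use tightness, the Skorokhod representation, identification of every subsequential limit as the unique weak solution of the heat equation, and the Gy\"ongy--Krylov lemma --- exactly the paper's argument. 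The one step to discard is the energy identity $\frac{d}{dt}\|\re\|_{-2}^2=2\langle\re_t,\re\rangle_{-2}$, which fails here because the difference process has jumps and (as you yourself observe) no finite second moment; your alternative first-moment Gr\"onwall is what the paper actually runs, with the ``crux'' exponent you leave open resolved not by a sharp bound on $\mathbb{E}\sup_t\|\bvbe_3\|$ alone but by the algebraic cancellation above together with $\mathbb{E}\sup_t\|\ep^{1/\al}\bvbe_3(t)\|\lesssim1$, which is already contained in the proof of Lemma \ref{H-1norm} via $(\mathbf{A_1})$.
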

		\section{Well-posedness of SPDEs}\label{wellposed}
		We give definition of stochastic wave equations and stochastic heat equations driven by cylindrical stable process. 
		Let $L$ be a cylindrical stable process taking form of (\ref{cylinderical}). 
		\begin{definition}\label{defofsolution}
			A weak solution of a stochastic heat equation 
			\begin{equation}\label{formofSHE}
				u_t=\Delta u+f(u)+\dot{L},\enspace u(0)=u_0\in L^2(D),\enspace u|_{\pa D}=0
			\end{equation}
			is a $L^2(D)$-valued predictable process such that for each $\phi\in C^2([0,T]\times D)$ with $\phi|_{\partial D}=0$, the following equality holds in the sense that they are indistinguishable:
			\begin{eqnarray}\label{defofSHE}
				&&(u(t),\phi(t))-(u_0,\phi(0))\nonumber\\&=&\intot(u(s),\phi_t(s)+\Delta\phi(s))ds+\intot (f(u(s)),\phi(s))ds\nonumber\\&&+\sum_{k=1}^{\infty}\lambda_k\intot(\phi(s),e_k)dL_k(s).
			\end{eqnarray}
		\end{definition}
		Similarly, we give definition of the weak solution of a stochastic wave equation.
		
		\begin{definition}\label{defofsolution2}
			A weak solution of a stochastic wave equation
			\begin{equation}\label{formofSWE}
				\ep u_{tt}+u_t=\Delta u+f(u)+\dot{L},\enspace u(0)=u_0\in H_0^1(D),\enspace v(0)=v_0\in L^2(D), u|_{\pa D}=0,
			\end{equation}
			is a $H_0^1(D)\times L^2(D)$-valued predictable process $(u,v)$ such that for each $\phi\in C^1([0,T]\times D)$ with $\phi|_{\partial D}=0$, the following equalities hold in the sense that they are indistinguishable:
			\begin{eqnarray}\label{defofSWE}
				\begin{cases}
					(u(t),\phi(t))-(u_0,\phi(0))=\intot(u(s),\phi_t(s))+(v(s),\phi(s))ds,\\
					(v(t),\phi(t))-(v_0,\phi(0))\\=\frac{1}{\ep}\Big[\intot-(v(s),\phi(s))-(\nabla u(s),\nabla\phi(s))+(f(u(s)),\phi(s)))ds\Big]+\intot(v(s),\phi_t(s))ds\\+\frac{1}{\ep}\sum\limits_{k=1}^{\infty}\lambda_k\intot(\phi(s),e_k)dL_k(s).
				\end{cases}
			\end{eqnarray}
		\end{definition}
		\begin{remark}
			In Definition \ref{defofsolution} and Definition \ref{defofsolution2}, the term "weak" is in the sense of PDE. Our solutions are strong solutions in the sense of probability, since the stochastic basis is given and we never change it.
		\end{remark}
		\begin{prop}\label{wellposedness}
			(i) Equation (\ref{wave}) admits a weak solution $(\ube,\vbe)$ with trajectory belongs to $\mathbb{C}([0,T];H_0^1(D))\times\mathbb{D}([0,T]\times L^2(D))$ with probability 1. The solution is unique in the sense that if $(u_1,v_1)$ and $(u_2,v_2)$ are two weak solutions of (\ref{wave}), then they are indistinguishable.
			
			(ii) Equation (\ref{limitequation}) admits a  weak solution $\bube$ with trajectory belongs to $\mathbb{D}([0,T];L^2(D))$ with probability 1. The solution is unique in the sense that if $u_1$ and $u_2$ are two weak solutions of (\ref{limitequation}), then they are indistinguishable.
		\end{prop}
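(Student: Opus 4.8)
The plan is to exploit that the noise is additive: for each equation I would subtract off an Ornstein--Uhlenbeck type process that carries the stochastic forcing, so that the remainder solves, for almost every fixed $\omega$, a \emph{deterministic} semilinear evolution equation, which is handled by a routine contraction argument using that $f$ is globally Lipschitz $(\mathbf{A_2})$. The only genuinely delicate issue — the one highlighted in the abstract — is that this Ornstein--Uhlenbeck process takes values in the infinite-dimensional space $H$, where an a.s.\ convergent series of one-dimensional c\`adl\`ag processes need not be c\`adl\`ag (cf.\ \cite{PZ10}); controlling this is exactly what assumption $(\mathbf{A_1})$ is designed for.

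\emph{Heat equation.} Let $S(t)=e^{t\Delta}$ be the heat semigroup and set, componentwise,
$$\zeta_k(t)=\intot e^{-\al_k(t-s)}\,dL_k(s),\qquad Z^\ep(t)=\ep^\theta\sum_{k\in\mathbb N}\lambda_k\zeta_k(t)e_k .$$
Each $\zeta_k$ is a one-dimensional c\`adl\`ag Ornstein--Uhlenbeck process; since $\al>1$ every $L_k$ is an $L^p$-martingale for $p\in(1,\al)$, so integration by parts gives $\supt|\zeta_k(t)|\le 2\supt|L_k(t)|$, and Doob's inequality together with the self-similarity $L_k(t)\overset{d}{=}t^{1/\al}L_k(1)$ yields $\mathbb E\,\supt|\zeta_k(t)|^p\lesssim 1$ uniformly in $k$ (with the sharper $\mathbb E|\zeta_k(t)|^p\lesssim\al_k^{-p/\al}$ for the non-supremum moments). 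Since $\sum_k\lambda_k<\infty$ by $(\mathbf{A_1})$, the truncations $Z^\ep_N:=\ep^\theta\sum_{k\le N}\lambda_k\zeta_ke_k$ are Cauchy in $L^p(\Omega;L^\infty(0,T;H))$, and because $\supt\|Z^\ep_N(t)-Z^\ep(t)\|=\supt(\sum_{k>N}\lambda_k^2\zeta_k(t)^2)^{1/2}$ is nonincreasing in $N$, the convergence is in fact a.s.\ uniform on $[0,T]$; as each $Z^\ep_N$ is c\`adl\`ag, so is $Z^\ep$, i.e.\ $Z^\ep\in\mathbb D([0,T];H)$ a.s. Then $Y^\ep:=\bube-Z^\ep$ should solve the deterministic equation $Y^\ep_t=\Delta Y^\ep+f(Y^\ep+Z^\ep)$, $Y^\ep(0)=u_0$; its mild form $Y^\ep(t)=S(t)u_0+\intot S(t-s)f(Y^\ep(s)+Z^\ep(s))\,ds$ is, by $(\mathbf{A_2})$ and $Z^\ep\in\mathbb D([0,T];H)$, a contraction on $C([0,T];H)$ after the usual exponential reweighting, so it has a unique solution $Y^\ep\in C([0,T];H)$ and hence $\bube:=Y^\ep+Z^\ep\in\mathbb D([0,T];L^2(D))$. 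Finally one checks that $\bube$ satisfies the weak formulation (\ref{defofSHE}) — pair the mild identity with $\phi$, transfer the derivatives onto $\phi$ using that $S(t)$ is self-adjoint and analytic, and apply a stochastic Fubini theorem to $\sum_k\lambda_k\intot(\phi(s),e_k)\,dL_k(s)$ — and conversely a Gronwall argument shows that any weak solution has this same mild representation, whence uniqueness up to indistinguishability.

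\emph{Wave equation.} I would run the same scheme on the first-order system (\ref{slowfast}), using precisely the splitting (\ref{bveq})--(\ref{decomposition}). There $\bvbe_1(t)=\ep e^{-t/\ep}v_0$ is explicit and continuous; $\bvbe_3$ is the Ornstein--Uhlenbeck process driven by $L$ with rate $\ep^{-1}$ and amplitude $\ep^{-1/\al}$, whose c\`adl\`ag property in $H$ follows from the componentwise estimates above after an $\ep$-rescaling, now using the full strength of $(\mathbf{A_1})$ — the summability against $\al_k^{3/2}$ yields the spatial regularity needed so that, after one time integration, the noise part of $\ube$ lies in $H_0^1(D)$, while the term $\al_k^{1/2}k^{1+\gamma}\lambda_k^2$, together with $\sum_k\al_k^{1/2}k^{-1-\gamma}<\infty$, converts the $L^p$ bounds ($p<\al$) on the tails of the series in the $L_k(t)$ into a.s.\ finiteness; the coupled pair $(\ube,\bvbe_2)$, linked through $\Delta\ube+f(\ube)$, is obtained by a contraction in $C([0,T];H_0^1(D))\times C([0,T];L^2(D))$ using $(\mathbf{A_2})$ and $(\mathbf{A_4})$. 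Reassembling via (\ref{decomposition}) gives $\vbe\in\mathbb D([0,T];L^2(D))$ and $\ube=u_0+\intot\vbe\,ds\in C([0,T];H_0^1(D))$ a.s.; the verification of the weak formulation (\ref{defofSWE}) and the uniqueness are as in the heat case.

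I expect the main obstacle to be the c\`adl\`ag regularity of the stochastic convolution / of $\bvbe_3$ in $H$: the argument must be uniform in $t$ and cannot be read off componentwise (this is the phenomenon behind \cite{PZ10}), while the only integrability available is $L^p$ with $p<\al<2$, since the second moment and the L\'evy measure of each $L_k$ are infinite. Pinning down the monotone-tail (Kronecker-type) argument that upgrades the $L^p$ estimates furnished by $(\mathbf{A_1})$ to a.s.\ uniform convergence on $[0,T]$, while simultaneously extracting from the same estimates the $H_0^1(D)$-regularity required of $\ube$, is where the real work lies; the deterministic fixed points and the weak--mild equivalence are routine once $f$ is globally Lipschitz.
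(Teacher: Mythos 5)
Your overall architecture is the same as the paper's: peel off the additive-noise (linear) part, prove its c\`adl\`ag regularity in $H$ by showing the componentwise series converges a.s.\ uniformly on $[0,T]$ (the paper does this via $\sum_k\mathbb{E}\supt\|u_ke_k\|_1<\infty$ plus the Cauchy principle and a uniform-limit-of-c\`adl\`ag lemma, which is essentially your monotone-tail upgrade), and then solve a pathwise deterministic remainder equation. Your treatment of the heat equation (ii) is sound. The problem is in the wave equation (i), where your deterministic step has a genuine gap: you propose to obtain the coupled pair $(\ube,\bvbe_2)$ by a contraction in $C([0,T];H_0^1(D))\times C([0,T];L^2(D))$ with the coupling running through $\Delta\ube+f(\ube)$. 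But for $\ube\in C([0,T];H_0^1(D))$ one only has $\Delta\ube\in H^{-1}(D)$, so the variation-of-constants formula for $\bvbe_2$ lands in $C([0,T];H^{-1}(D))$, not $C([0,T];L^2(D))$; feeding this back into $\ube(t)=u_0+\int_0^t(\cdots)ds$ loses two spatial derivatives per iteration and the fixed-point map does not leave the stated space invariant. This is exactly why the paper does \emph{not} run a Picard iteration on the splitting (\ref{bveq}): it solves the full linear stochastic wave equation (including $\Delta$) componentwise by the explicit kernels of Lemma \ref{anODEresult}, and handles the nonlinear remainder (\ref{pdpde}) by Faedo--Galerkin energy estimates, where the troublesome term appears only as $(\Delta\rho,\rho_t)=-\tfrac12\tfrac{d}{dt}\|\nabla\rho\|^2$, an exact derivative. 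A mild formulation with the damped wave group on $H_0^1\times L^2$ would also work, but that is a different decomposition from (\ref{bveq}).

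A second, related soft spot: you assert that the $\al_k^{3/2}\lambda_k$ summability in $(\mathbf{A_1})$ gives the $H_0^1$-regularity of the noise contribution to $\ube$ ``after one time integration'' of $\bvbe_3$. Time integration does not gain spatial regularity, and $\ep^{\theta+\frac1\al-1}\int_0^t\bvbe_3\,ds$ alone is only of size $\lambda_k\supt|L_k(t)|$ componentwise, with no $\al_k^{-1/2}$ gain. In the paper the gain $\mathbb{E}|\uek(t)|^p\lesssim\lambda_k^p\al_k^{-p/2}$ (Lemma \ref{H1norm}) comes from the coupled second-order structure --- the feedback term $-\al_k\uek$ --- analyzed through the explicit damped/oscillatory kernel $\fek$ and the three-case discussion on $1-4\al_k\ep$; that estimate, together with Lemma \ref{H-1norm}, is what makes $\sum_k\al_k^{1/2}\mathbb{E}\supt|u_k(t)|$ summable under $(\mathbf{A_1})$. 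Your proposal does not supply a substitute for this computation, so the claimed $H_0^1(D)$-valued continuity of the linear part of $\ube$ is not established as written.
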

		The proof is postponed to Appendix \ref{appen}.
	
        \section{Moment estimate for $\ube$}\label{momentestimate}
       In this section, we give moment estimate for $\ube$. For this purpose, we firstly deal with $\ue$, which is the linear part of the stochastic wave equation, that is,
       \begin{equation}
       	\begin{cases}
       		\ep\ue_{tt}+\ue_t=\Delta \ue+\ep^\theta\dot{L},\\
       		\ue(0)=0,\enspace\ue_t(0)=0.
       	\end{cases}
       \end{equation} 
       In order to establish the moment estimate for $\ue$ and $\ube$, it suffices to consider the case $\theta=0,$ since the noise behaves less singularity when $\theta>0.$ We write this equation componentwise, that is for each $k\in\mathbb{N},$
       \begin{equation}\label{1Dwave}
       	\begin{cases}
       		\dot{u}^\ep_k(t)=\vek(t), \enspace\uek(0)=0,\\
       		\dot{v}^\ep_k(t)=-\frac{1}{\ep}[\al_k\uek(t)+\vek(t)]+\lambda_k \dot{L}_k(t),\enspace\vek(0)=0.
       	\end{cases}
       \end{equation}
       We begin with a lemma solving analytically a second-order linear ODE, whose proof is a direct computation.
       \begin{lemma}\label{anODEresult}(\cite[Proposition 2.2]{CF06A})
       	For each $\ep>0$ and $k\in\mathbb{N},$ let $(\fek,g^\ep_k)$ be the solution of
       	\begin{equation}
       		\begin{cases}
       			f^{\prime}(t)=g(t),\enspace f(0)=0,\\
       			\ep g^{\prime}(t)=-\al_k f(t)-g(t),\enspace g(0)=1.
       		\end{cases}
       	\end{equation}
       	Then
       	$$\fek(t)=\frac{1}{2}e^{-\frac{t}{2\ep}}\frac{1}{\gek}(e^{\gek t}-e^{-\gek t}),$$
       	$$g^\ep_k(t)=\frac{1}{2}e^{-\frac{t}{2\ep}}\Big[(1-\frac{1}{2\ep\gek})e^{\gek t}+(1+\frac{1}{2\ep\gek})e^{-\gek t}\Big],$$ where $\gek:=\frac{\sqrt{1-4\al_k\ep}}{2\ep}\in\mathbb{C}$ and $\frac{1}{\gek}(e^{\gek t}-e^{-\gek t}):=2t$ in the case $\gek=0.$ 	
       	
       \end{lemma}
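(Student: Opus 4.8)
The plan is a routine reduction to a scalar second-order constant-coefficient ODE. First, differentiating the first equation $f' = g$ and substituting into the second, the pair $(f,g)$ solves the system if and only if $f$ solves the single equation $\ep f'' + f' + \al_k f = 0$ with $f(0) = 0$ and $f'(0) = g(0) = 1$, and then $g$ is recovered as $g = f'$. So it suffices to solve this scalar problem and differentiate.

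Next I would write down the characteristic equation $\ep r^2 + r + \al_k = 0$, whose roots are $r_\pm = -\frac{1}{2\ep} \pm \gek$ with $\gek = \frac{\sqrt{1-4\al_k\ep}}{2\ep}$. When $\gek \neq 0$ the general solution is $f(t) = e^{-t/(2\ep)}(C_1 e^{\gek t} + C_2 e^{-\gek t})$; imposing $f(0)=0$ forces $C_2 = -C_1$, and then $f'(0) = 2\gek C_1 = 1$ gives $C_1 = \frac{1}{2\gek}$. This yields exactly
$$\fek(t) = \frac{1}{2}e^{-\frac{t}{2\ep}}\frac{1}{\gek}\bigl(e^{\gek t} - e^{-\gek t}\bigr).$$
Differentiating $\fek$ (using the product rule, with one term coming from the factor $e^{-t/(2\ep)}$ and one from the bracket) and collecting the coefficients of $e^{\gek t}$ and $e^{-\gek t}$ produces precisely the stated formula for $g^\ep_k$.

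Finally, for the degenerate case $\gek = 0$, i.e.\ $1 - 4\al_k\ep = 0$, the characteristic equation has the double root $-\frac{1}{2\ep}$, so $f(t) = e^{-t/(2\ep)}(C_1 + C_2 t)$; the conditions $f(0)=0$, $f'(0)=1$ give $C_1 = 0$, $C_2 = 1$, hence $\fek(t) = t\,e^{-t/(2\ep)}$, which is consistent with the convention $\frac{1}{\gek}(e^{\gek t} - e^{-\gek t}) := 2t$. There is no genuine obstacle here; the only point requiring a line of care is that $\gek$ may be complex (when $4\al_k\ep > 1$), but all the identities above are algebraic in $\gek$ and hold verbatim for complex $\gek$ since $\fek$ and $g^\ep_k$ are real by construction (the right-hand sides are even functions of $\gek$, combined through $\sinh$ and $\cosh$).
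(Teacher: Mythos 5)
Your proof is correct and is exactly the ``direct computation'' the paper alludes to (the paper omits the argument entirely, citing \cite[Proposition 2.2]{CF06A}): reduction to the scalar equation $\ep f''+f'+\al_k f=0$, the characteristic roots $-\frac{1}{2\ep}\pm\gek$, the initial conditions, and the degenerate double-root case all check out, as does your remark that the formulas remain real-valued when $\gek$ is imaginary.
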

     
       In the lemma below, we give an explicit expression for $\uek,$ which is a direct consequence of the variation-of-constant formula.
       \begin{lemma}\label{expessionforu}
       	For each $\ep>0$ and $k\in\mathbb{N},$ 
       	$$\uek(t)=\frac{\lambda_k}{\ep}\intot\fek(t-s)dL_k(s),$$
       	$$\vek(t)=\frac{\lambda_k}{\ep}\intot g^\ep_k(t-s)dL_k(s).$$
       \end{lemma}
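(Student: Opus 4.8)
The statement to prove is Lemma~\ref{expessionforu}, giving the explicit expression for $\uek$ and $\vek$ via the variation-of-constants (Duhamel) formula applied to the linear stochastic ODE system \eqref{1Dwave}.

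The plan is to recognize \eqref{1Dwave} as a linear inhomogeneous system driven by the noise $\lambda_k \dot L_k$ and to apply the deterministic variation-of-constants formula, with the fundamental solution identified through Lemma~\ref{anODEresult}. Write the system in vector form for $Z^\ep_k := (\uek, \vek)^\top$, namely
\begin{equation*}
	dZ^\ep_k(t) = \frac{1}{\ep} M_k Z^\ep_k(t)\, dt + \lambda_k\, e_2\, dL_k(t), \qquad Z^\ep_k(0) = 0,
\end{equation*}
where $M_k = \begin{pmatrix} 0 & \ep \\ -\al_k & -1 \end{pmatrix}$ and $e_2 = (0,1)^\top$ (the $\ep$ in the off-diagonal slot simply reflecting $\dot{u}^\ep_k = \vek$ against the scaling $\ep g' = \cdots$; one may equivalently keep the $\ep^{-1}$ only on the second row). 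The fundamental matrix $\Phi_k(t) = \exp(t\ep^{-1}M_k)$ has its second column given exactly by $(\fek(t), g^\ep_k(t))^\top$, since by Lemma~\ref{anODEresult} the pair $(\fek, g^\ep_k)$ solves the homogeneous system with initial condition $(0,1)^\top$, i.e.\ $\fek(0)=0$, $g^\ep_k(0)=1$. Then Duhamel's formula for the stochastic convolution gives
\begin{equation*}
	Z^\ep_k(t) = \intot \Phi_k(t-s)\,\lambda_k\, e_2 \, dL_k(s),
\end{equation*}
and reading off the two components yields $\uek(t) = \lambda_k \intot \fek(t-s)\, dL_k(s)$ and $\vek(t) = \lambda_k \intot g^\ep_k(t-s)\, dL_k(s)$. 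One then checks the factor of $\ep^{-1}$: with the convention that $\fek$ is normalized as in Lemma~\ref{anODEresult} (which solves $\ep g' = -\al_k f - g$, not $g' = \ep^{-1}(\cdots)$), rescaling time or the driving coefficient produces the stated $\lambda_k/\ep$ prefactor, consistent with \eqref{1Dwave} having $\ep^{-1}$ in front of the drift and $\lambda_k$ (not $\lambda_k/\ep$) in front of $\dot L_k$.

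To make this rigorous rather than formal, I would verify directly that the claimed processes satisfy \eqref{1Dwave} in the appropriate (e.g.\ mild/weak) sense. Substitute $\uek(t) = \frac{\lambda_k}{\ep}\intot \fek(t-s)\,dL_k(s)$ into the equations: since $\fek$ is $C^1$ with $\fek(0)=0$, a stochastic Fubini / integration-by-parts argument (valid for the $\al$-stable integrator since $\fek$ is deterministic and smooth) shows $\dot{u}^\ep_k(t) = \frac{\lambda_k}{\ep}\intot (\fek)'(t-s)\,dL_k(s) = \frac{\lambda_k}{\ep}\intot g^\ep_k(t-s)\,dL_k(s) = \vek(t)$, using $(\fek)' = g^\ep_k$ from Lemma~\ref{anODEresult}. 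Similarly, using $\ep (g^\ep_k)' = -\al_k \fek - g^\ep_k$ and $g^\ep_k(0)=1$, one computes $\dot{v}^\ep_k(t) = \lambda_k \dot L_k(t) + \frac{\lambda_k}{\ep}\intot \big(\ep(g^\ep_k)'(t-s)/\ep\big)\cdots$, which after collecting terms gives exactly $-\frac{1}{\ep}[\al_k\uek + \vek] + \lambda_k \dot L_k$. Uniqueness of the solution to this linear system (it is globally Lipschitz with additive noise) then identifies $(\uek,\vek)$ with the stated formulas.

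The only genuine subtlety — and the step I would flag as the main obstacle — is the manipulation of the stochastic convolution against an $\al$-stable integrator: differentiating under the integral sign and the integration-by-parts identity $\intot \fek(t-s)\,dL_k(s) = \fek(0)L_k(t) + \intot (\fek)'(t-s) L_k(s)\,ds$ must be justified, but since $\fek \in C^1([0,T])$ is deterministic this is standard (it follows from a pathwise or $L^p$, $p<\al$, argument, or from the general stochastic Fubini theorem for L\'evy integrals, cf.\ \cite[Chapter 4]{peszat2007stochastic}); boundary terms vanish because $\fek(0)=0$. With that in hand the computation is a direct consequence of Lemma~\ref{anODEresult}, exactly as the statement asserts.
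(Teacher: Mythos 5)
Your proposal is correct and takes essentially the same route as the paper, which offers no proof beyond declaring the lemma ``a direct consequence of the variation-of-constant formula'': you identify the second column of the fundamental matrix with $(\fek,g^\ep_k)$ via Lemma \ref{anODEresult} and apply Duhamel, then verify by differentiating the stochastic convolution. One bookkeeping caveat: the stated formulas with prefactor $\lambda_k/\ep$ solve the componentwise system with noise coefficient $\ep^{-1}\lambda_k$ (as in (\ref{slowfast}) with $\theta=0$ and in (\ref{kthcomponent})), not the coefficient $\lambda_k$ literally displayed in (\ref{1Dwave}); your attempted reconciliation (``consistent with \dots $\lambda_k$, not $\lambda_k/\ep$, in front of $\dot L_k$'') gets this backwards, and your final verification would in fact return the noise term $\ep^{-1}\lambda_k\dot L_k$ rather than $\lambda_k\dot L_k$. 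The discrepancy originates in the paper's own display of (\ref{1Dwave}), so this does not affect the substance of your argument.
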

      
		\noindent With the help of Lemma \ref{anODEresult} and Lemma \ref{expessionforu}, we give an important moment estimate for $\uek.$
		\begin{lemma}\label{H1norm}
			For each $1\leq p<\al,$
			$$\supe\supt \mathbb{E}|\uek(t)|^p\lesssim\frac{\lambda_k^p}{\al_k^{p/2}}.$$
		\end{lemma}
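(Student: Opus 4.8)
The plan is to start from the explicit representation $\uek(t)=\frac{\lambda_k}{\ep}\intot\fek(t-s)\,dL_k(s)$ given in Lemma~\ref{expessionforu} and estimate the $p$-th moment of this stochastic integral against the normalized $\al$-stable process $L_k$. Since $1\le p<\al$, one can use the standard moment bound for stochastic integrals with respect to a symmetric $\al$-stable Lévy process: for a deterministic integrand $h$,
$$\mathbb{E}\Big|\intot h(s)\,dL_k(s)\Big|^p\lesssim\Big(\int_0^t|h(s)|^\al\,ds\Big)^{p/\al},$$
which follows either from the scaling/self-similarity of the stable process together with the fact that $\al$-stable variables have finite moments of all orders strictly below $\al$, or from a direct computation with the Lévy--Khintchine exponent. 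Applying this with $h(s)=\frac{\lambda_k}{\ep}\fek(t-s)$ reduces the problem to controlling $\frac{\lambda_k^p}{\ep^p}\big(\int_0^t|\fek(r)|^\al\,dr\big)^{p/\al}$ uniformly in $\ep\in(0,1]$ and $t\in[0,T]$.

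Next I would plug in the explicit formula $\fek(t)=\frac{1}{2}e^{-t/(2\ep)}\frac{1}{\gek}(e^{\gek t}-e^{-\gek t})$ from Lemma~\ref{anODEresult} and bound $\int_0^\infty|\fek(r)|^\al\,dr$. The key elementary fact is that $\fek$ is (up to the factor $\ep$) the impulse-response kernel of the damped oscillator, and one has the uniform-in-$\ep$ bounds $|\fek(t)|\lesssim \ep\, e^{-c\al_k t}$ type estimates only in certain regimes, so it is cleaner to argue that $\frac1\ep\fek$ behaves like the kernel whose $L^1$-norm in time is $\frac1{\al_k}$ and whose sup-norm is $\mathcal{O}(1)$ uniformly in $\ep$ (indeed $\int_0^\infty \frac1\ep\fek(r)\,dr=\frac1{\al_k}$, since this is $\widehat{\fek}$-type identity / follows from integrating the ODE, and $\sup_r\frac1\ep\fek(r)\le C$ can be checked by distinguishing the overdamped case $4\al_k\ep<1$ from the underdamped case $4\al_k\ep>1$). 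Interpolating, $\int_0^\infty\big(\frac1\ep\fek(r)\big)^\al dr\le \|\tfrac1\ep\fek\|_\infty^{\al-1}\|\tfrac1\ep\fek\|_{L^1}\lesssim \frac{1}{\al_k}$, hence $\big(\int_0^t|\frac1\ep\fek(r)|^\al dr\big)^{p/\al}\lesssim \al_k^{-p/\al}$. This already gives $\mathbb{E}|\uek(t)|^p\lesssim \lambda_k^p\al_k^{-p/\al}$, which is slightly weaker than the claimed $\lambda_k^p\al_k^{-p/2}$ when $\al<2$, so the estimate on $\fek$ must be sharpened.

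To get the exponent $p/2$ rather than $p/\al$ one should not interpolate crudely but instead compute (or sharply bound) $\int_0^\infty\big(\frac1\ep\fek(r)\big)^\al\,dr$ directly. In the underdamped regime $\fek(t)=\frac{\ep}{\sqrt{\al_k}}\cdot\frac{1}{\sqrt{1-4\al_k\ep}}\,e^{-t/(2\ep)}\sin(\omega_k t)$ type behaviour (with $\omega_k\asymp\sqrt{\al_k}$ once $\al_k\ep\gtrsim1$), so $\frac1\ep\fek(r)\asymp \al_k^{-1/2}e^{-r/(2\ep)}$ in amplitude, and $\int_0^\infty \al_k^{-\al/2}e^{-\al r/(2\ep)}\,dr\asymp \al_k^{-\al/2}\ep\lesssim \al_k^{-\al/2}$; combined with $\al_k^{-\al/2}\le \al_k^{-p/2}$ only when $p\le\al$… so I must instead keep the $\ep$ gain: $\al_k^{-\al/2}\ep \le \al_k^{-\al/2}\cdot \al_k^{-1}$ is false in general, so the right bookkeeping is that the amplitude is $\al_k^{-1/2}$ and the time-scale of decay is $\min(\ep,\al_k^{-1/2})$-ish, giving $\int(\frac1\ep\fek)^\al\lesssim \al_k^{-\al/2}\cdot\al_k^{-1/2}=\al_k^{-(\al+1)/2}$ in the oscillatory regime and a matching bound in the overdamped regime, so $\big(\int|\frac1\ep\fek|^\al\big)^{p/\al}\lesssim \al_k^{-(\al+1)p/(2\al)}\le \al_k^{-p/2}$ since $(\al+1)/\al\ge1$. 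Thus the chain is: (1) stable-integral $p$-th moment bound; (2) substitute the explicit $\fek$; (3) split into overdamped/underdamped cases and bound $\int_0^\infty|\frac1\ep\fek(r)|^\al\,dr\lesssim \al_k^{-(\al+1)/2}$ uniformly in $\ep,t$; (4) assemble.

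The main obstacle is step~(3): obtaining the uniform-in-$\ep$ bound on $\int_0^\infty|\fek(r)/\ep|^\al\,dr$ with the \emph{correct} power of $\al_k$ (namely $\al_k^{-(\al+1)/2}$, which is exactly what is needed for the target $\al_k^{-p/2}$), because $\fek$ changes qualitative character at $\ep=1/(4\al_k)$ and one must handle the complex exponents $\gek$ carefully, controlling both the oscillation frequency and the decay rate simultaneously, and checking there is no blow-up near the critical value $4\al_k\ep=1$ where $\gek\to0$. Everything else — the moment inequality for stable stochastic integrals and the final reassembly — is routine once this kernel estimate is in hand.
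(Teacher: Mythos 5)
Your overall strategy --- the $p$-th moment inequality for stochastic integrals against a symmetric $\al$-stable process, the explicit kernel $\fek$ from Lemma \ref{anODEresult}, and a case split at the critical value $4\al_k\ep=1$ --- is exactly the paper's route, but the kernel estimate you yourself isolate as ``the main obstacle'' (your step (3)) is carried out incorrectly in three places. First, your orientation is backwards: since $\al<2$ one has $p/\al>p/2$, so the bound $\lambda_k^p\al_k^{-p/\al}$ is \emph{stronger} than the target $\lambda_k^p\al_k^{-p/2}$ (up to a constant depending only on $\al_1$), not ``slightly weaker''; had your interpolation bound $\int_0^\infty|\fek(r)/\ep|^\al\,dr\lesssim\al_k^{-1}$ been true, the lemma would already follow and no sharpening would be needed. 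Second, that interpolation bound is not true, because the identity $\int_0^\infty\fek(r)/\ep\,dr=\al_k^{-1}$ obtained by integrating the ODE concerns the \emph{signed} integral: in the underdamped regime $\fek$ oscillates, and $\int_0^\infty|\fek(r)/\ep|\,dr\asymp\sqrt{\ep/\al_k}$, which for $\ep$ of order one is $\al_k^{-1/2}$, far larger than $\al_k^{-1}$. Third, the sharpened bound you then aim for, $\int_0^\infty|\fek(r)/\ep|^\al\,dr\lesssim\al_k^{-(\al+1)/2}$, is false: the amplitude of $\fek(r)/\ep$ in the oscillatory regime is $(\ep\al_k)^{-1/2}$ (not $\al_k^{-1/2}$, since $|\gek|\asymp\sqrt{\al_k/\ep}$) and the decay time is $\asymp\ep$, so the integral is of order $\ep^{1-\al/2}\al_k^{-\al/2}$, which at $\ep=1$ is $\asymp\al_k^{-\al/2}\gg\al_k^{-(\al+1)/2}$.

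The saving grace is that the correct underdamped bound $\ep^{1-\al/2}\al_k^{-\al/2}\le\al_k^{-\al/2}$ is already sufficient, because $\big(\al_k^{-\al/2}\big)^{p/\al}=\al_k^{-p/2}$ is exactly the claimed rate; this is precisely the computation in Case 3 of the paper's proof, while the overdamped regime yields the stronger rate $\al_k^{-p/\al}$ (the paper's Cases 1 and 2), and the degenerate point $\gek=0$ is absorbed by a continuity/Fatou argument (the paper's Case 4). So your plan would go through once the amplitude and the target exponent in step (3) are corrected to $(\ep\al_k)^{-1/2}$ and $\al_k^{-\al/2}$ respectively; as written, however, the central estimate you rely on is incorrect, and the reasoning that led you to seek it rests on a reversed inequality.
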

		\begin{proof}
			 Since $\uek(t)=\lambda_k\intot\frac{\fek(t-s)}{\ep}dL_k(s),$ by \cite[Theorem 3.2]{RW06} and a change-of-variable, we have
			\begin{eqnarray}\label{thirdlineusedonly}
				&&\mathbb{E}|\uek(t)|^p\nonumber\\
				&\lesssim&\lambda_k^p\Big(\mathbb{E}\intot\Big|\frac{\fek(t-s)}{\ep}\Big|^\al ds\Big)^{p/\al}\nonumber
				\\&=&\frac{\lambda_k^p}{\ep^p}\Big(\intot|\fek(s)|^\al\Big)^{p/\al}\nonumber\\&=&\frac{\lambda_k^p}{\ep^p}\Big(\intot\frac{1}{2^\al}e^{-\frac{s\al}{2\ep}}\frac{1}{|\gek|^\al}|e^{\gek s}-e^{-\gek s}|^\al ds\Big)^{p/\al}\nonumber\\&\lesssim&\frac{\lambda_k^p}{\ep^p|\gek|^p}\Big(\intot e^{-\frac{s\al}{2\ep}}|e^{\gek s}(1-e^{-2\gek s})|^\al ds\Big)^{p/\al}\nonumber\\&=&\frac{\lambda_k^p}{\ep^p|\gek|^p}\Big(\int_0^{t/\ep} e^{-\frac{s\al}{2}}|e^{\gek\ep s}(1-e^{-2\gek\ep s})|^\al\ep ds\Big)^{p/\al}\nonumber\\&=& \frac{\lambda_k^p}{\ep^{p-p/\al}|\gek|^p}\Big(\int_0^{t/\ep} e^{-\frac{s\al}{2}}|e^{\gek\ep s}(1-e^{-2\gek\ep s})|^\al ds\Big)^{p/\al}.
			\end{eqnarray}
			Recall that $\gek=\frac{\sqrt{1-4\al_k\ep}}{2\ep},$ so that we can continue our estimate as
			\begin{eqnarray}\label{generalestimateforuek}
				&&\mathbb{E}|\uek(t)|^p\nonumber\\&\lesssim&\frac{\lambda_k^p\ep^{p/\al}}{|\sqrt{1-4\al_k\ep}|^p}\Big(\int_0^{t/\ep} e^{-\frac{s\al}{2}}|e^{\gek\ep s}(1-e^{-2\gek\ep s})|^\al ds\Big)^{p/\al}\nonumber\\&=&\lambda_k^p\ep^{p/\al}\Big(\int_0^{t/\ep} e^{-\frac{s\al}{2}}|e^{\gek\ep s}|^\al\Big|\frac{1-e^{-2\gek\ep s}}{\sqrt{1-4\al_k\ep}}\Big|^\al ds\Big)^{p/\al}\nonumber\\&=&\lambda_k^p\ep^{p/\al}\Big(\int_0^{t/\ep} e^{-\frac{s\al}{2}}\Big|e^{\frac{\sqrt{1-4\al_k\ep}s}{2}}\Big|^\al\Big|\frac{1-e^{-2\gek\ep s}}{\sqrt{1-4\al_k\ep}}\Big|^\al ds\Big)^{p/\al}\nonumber\\&=&\lambda_k^p\ep^{p/\al}\Big(\int_0^{t/\ep} e^{-\frac{s\al}{2}}\Big|e^{\frac{\sqrt{(1-4\al_k\ep)^+}s}{2}}\Big|^\al\Big|\frac{1-e^{-2\gek\ep s}}{\sqrt{1-4\al_k\ep}}\Big|^\al ds\Big)^{p/\al}\nonumber\\&=&\lambda_k^p\ep^{p/\al}\Big(\int_0^{t/\ep} e^{-\frac{s\al}{2}+\frac{\sqrt{(1-4\al_k\ep)^+}s\al}{2}}\Big|\frac{1-e^{-2\gek\ep s}}{\sqrt{1-4\al_k\ep}}\Big|^\al ds\Big)^{p/\al}\nonumber\\&\leq&\lambda_k^p\ep^{p/\al}\Big(\int_0^{\infty} e^{-\frac{s\al}{2}+\frac{\sqrt{(1-4\al_k\ep)^+}s\al}{2}}\Big|\frac{1-e^{-2\gek\ep s}}{\sqrt{1-4\al_k\ep}}\Big|^\al ds\Big)^{p/\al}.
			\end{eqnarray}
\noindent\textbf{Case 1}\quad $0<\sqrt{1-4\al_k\ep}\leq\frac{1}{2}.$

\noindent Notice that \begin{eqnarray}
	&&e^{-\frac{s\al}{2}+\frac{\sqrt{(1-4\al_k\ep)^+}s\al}{2}}\Big|\frac{1-e^{-2\gek\ep s}}{\sqrt{1-4\al_k\ep}}\Big|^\al\nonumber\\&=&\Big(e^{-s+\sqrt{1-4\al_k\ep}s}\frac{|1-e^{-2\gek\ep s}|^2}{|1-4\al_k\ep|}\Big)^{\al/2}\nonumber\\&\leq&(e^{-s/2}s^2)^{\al/2}\nonumber\\&=&e^{-\al s/4}s^\al.
\end{eqnarray}
Substitute the inequality above into (\ref{generalestimateforuek}),
\begin{equation}\label{case1}
	\mathbb{E}|\uek(t)|^p\lesssim\lambda_k^p\ep^{\frac{p}{\al}}\Big(\int_{0}^{\infty}e^{-\al s/4}s^\al ds\Big)^{p/\al}\lesssim \lambda_k^p\ep^{\frac{p}{\al}}\lesssim \frac{\lambda_k^p}{\al_k^{p/\al}}.
\end{equation}
\noindent\textbf{Case 2}\quad $\sqrt{1-4\al_k\ep}>\frac{1}{2}.$

\noindent In this case, 
$$\Big|\frac{1-e^{-2\gek\ep s}}{\sqrt{1-4\al_k\ep}}\Big|^\al=\Big(\frac{|1-e^{-2\gek\ep s}|^2}{|1-4\al_k\ep|}\Big)^{\al/2}\lesssim 1,$$
Substitute the inequality above into (\ref{generalestimateforuek}),
\begin{equation*}
\mathbb{E}|\uek(t)|^p\lesssim\lambda_k^p\ep^{p/\al}\Big(\int_{0}^{\infty}e^{-\frac{s\al}{2}+\frac{\sqrt{(1-4\al_k\ep)^+}s\al}{2}}ds\Big)^{p/\al}.
\end{equation*}
But$$\int_{0}^{\infty}e^{-\frac{s\al}{2}+\frac{\sqrt{(1-4\al_k\ep)^+}s\al}{2}}ds=\frac{2(\sqrt{1-4\al_k\ep}+1)}{4\al_k\ep\al}\leq \frac{1}{\al_k\ep\al},$$
so that
\begin{equation}\label{case2}
	\mathbb{E}|\uek(t)|^p\lesssim\lambda_k^p\ep^{p/\al}(\frac{1}{\al_k\ep\al})^{p/\al}\lesssim \frac{\lambda_k^p}{\al_k^{p/\al}}.
\end{equation}
\noindent\textbf{Case 3}\quad $1-4\al_k\ep<0.$

\noindent Note that
\begin{eqnarray}
		&&e^{-\frac{s\al}{2}+\frac{\sqrt{(1-4\al_k\ep)^+}s\al}{2}}\Big|\frac{1-e^{-2\gek\ep s}}{\sqrt{1-4\al_k\ep}}\Big|^\al\nonumber\\&=&\Big(e^{-s+\sqrt{(1-4\al_k\ep)^+}s}\frac{|1-e^{-2\gek\ep s}|^2}{|1-4\al_k\ep|}\Big)^{\al/2}\nonumber\\&=&\Big[2e^{-s}\frac{1-cos(\sqrt{(1-4\al_k\ep)^-}s)}{(1-4\al_k\ep)^-}\Big]^{\al/2}\nonumber\\&\leq&\Big[\frac{s^2 e^{-s}}{(1-4\al_k\ep)^- s^2\vee1}\Big]^{\al/2}.
\end{eqnarray}
Substitute the inequality above into (\ref{generalestimateforuek}), 
\begin{eqnarray}\label{case3}
	&&\mathbb{E}|\uek(t)|^p\nonumber\\&\lesssim&\lambda_k^p\ep^{p/\al}\Big(\int_{0}^{\infty}\Big[\frac{s^2 e^{-s}}{(1-4\al_k\ep)^- s^2\vee1}\Big]^{\al/2}ds\Big)^{p/\al}\nonumber\\&=&\frac{\lambda_k^p\ep^{p/\al}}{\al_k^{p/2}\ep^{p/2}}\Big(\int_{0}^{\infty}\Big[\frac{\al_k\ep s^2 e^{-s}}{(1-4\al_k\ep)^- s^2\vee1}\Big]^{\al/2}ds\Big)^{p/\al}\nonumber\\&\lesssim&\frac{\lambda_k^p\ep^{p/\al-p/2}}{\al_k^{p/2}}\Big(\int_{0}^{\infty}[(1+s^2)e^{-s}]^{\al/2}ds\Big)^{p/\al}\nonumber\\&\lesssim&\frac{\lambda_k^p\ep^{p/\al-p/2}}{\al_k^{p/2}}\nonumber\\&\leq& \frac{\lambda_k^p}{\al_k^{p/2}}.
\end{eqnarray}
\noindent\textbf{Case 4}\quad $1-4\al_k\ep=0.$

\noindent From (\ref{case1}), (\ref{case2}) and (\ref{case3}), we know that for $\ep$ satisfying $1-4\al_k\ep\neq0$,
\begin{equation}
\mathbb{E}|\uek(t)|^p\lesssim \frac{\lambda_k^p}{\al_k^{p/\al}}\vee\frac{\lambda_k^p}{\al_k^{p/2}}=\frac{\lambda_k^p}{\al_k^{p/2}},
\end{equation}			
In fact, by inspecting into the third line of (\ref{thirdlineusedonly}), we have that for $\ep$ satisfying $1-4\al_k\ep\neq0,$
\begin{equation}\label{case41}
	\frac{\lambda_k^p}{\ep^p}\Big(\intot|\fek(s)|^\al ds\Big)^{p/\al}\lesssim \frac{\lambda_k^p}{\al_k^{p/2}},
\end{equation}
which is equivalent to say that for all $\mu\in(0,1]$ with $\mu\neq\frac{1}{4\al_k},$
\begin{equation}
	\intot\Big|\frac{f^\mu_k(s)}{\mu}\Big|^\al ds\lesssim \frac{1}{\al_k^{\al/2}}.
\end{equation}
By Fatou Lemma and (\ref{case41}), for $\ep=\frac{1}{4\al_k},$
\begin{eqnarray}
	\lambda_k^p\Big(\intot\Big|\frac{\fek(s)}{\ep}\Big|^\al ds\Big)^{p/\al}\leq\lambda_k^p\lim\limits_{\mu\rightarrow\ep,\mu\neq\ep}\Big(\intot\Big|\frac{\fek(s)}{\ep}\Big|^\al ds\Big)^{p/\al}\lesssim \frac{\lambda_k^p}{\al_k^{\al/2}}\leq\frac{\lambda_k^p}{\al_k^{p/2}}.
\end{eqnarray}
By inspecting into the third line of (\ref{thirdlineusedonly}) again, we conclude that
\begin{equation}\label{case4}
	\mathbb{E}|\uek(t)|^p\lesssim\frac{\lambda_k^p}{\ep^p}\Big(\intot|\fek(s)|^\al\Big)^{p/\al}\lesssim \frac{\lambda_k^p}{\al_k^{p/2}}.
\end{equation}

Combining the four cases above, we conclude that for each $0<\ep\leq 1$ and $k\in\mathbb{N},$ 
\begin{equation}\label{momentofu}
	\mathbb{E}|\uek(t)|^p\lesssim \frac{\lambda_k^p}{\al_k^{p/2}}.
\end{equation}

		\end{proof}
	
		Applying the lemma above, we give a moment estimate of $\uek$ which is uniform with respect to $t$.
		\begin{lemma}\label{H-1norm}
		  $$\supe\mathbb{E}\supt|\uek(t)|\lesssim \al_k^{1/2}\lambda_k+\lambda_k+\frac{1}{k^{1+\gamma}}+k^{1+\gamma}\lambda_k^2.$$
		\end{lemma}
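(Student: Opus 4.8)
The plan is to bound $\mathbb{E}\sup_t|\uek(t)|$ by splitting the supremum into a predictable drift part and a stochastic (martingale or stochastic-integral) part, controlling each separately. Recall from Lemma~\ref{expessionforu} that $\uek(t)=\frac{\lambda_k}{\ep}\intot\fek(t-s)dL_k(s)$, but the kernel $\fek$ is not monotone, so I cannot take the supremum directly inside. Instead I would use the ODE structure \eqref{1Dwave}: since $\due_k=\vek$ and $\ep\dve_k=-\al_k\uek-\vek+\ep\lambda_k\dot L_k$, integrating the second equation gives a representation of $\vek$ in terms of a genuine stochastic integral against $L_k$ plus lower-order terms, and then $\uek(t)=\intot\vek(s)ds$. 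Alternatively — and this is probably cleaner — I would apply It\^o's formula (the jump version) to a suitable functional such as $\al_k\uek(t)^2+\ep\vek(t)^2$ or directly to $\uek(t)$ via integration by parts in the stochastic integral, to separate a term of bounded variation from a martingale term.

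The key steps in order would be: (1) Derive from Lemma~\ref{expessionforu}, via integration by parts in the Lebesgue--Stieltjes / stochastic integral (using $\fek(0)=0$, $\dot f^\ep_k=g^\ep_k$, $\ep\dot g^\ep_k=-\al_k\fek-g^\ep_k$), a decomposition $\uek(t)=\lambda_k\, M_k(t)+(\text{drift terms})$ where $M_k(t)=\intot h_k(t-s)\,dL_k(s)$ with a kernel $h_k$ that I can control, or better, rewrite so that the leading stochastic contribution is $\frac{\lambda_k}{\al_k}$ times something like $L_k(t)-\intot(\text{decaying kernel})dL_k(s)$; the factor $\al_k^{1/2}\lambda_k$ in the claim strongly suggests the final martingale term after integration by parts is of the form $\frac{\lambda_k}{\al_k}\cdot(\text{stochastic integral with $L^\al$-norm of kernel} \sim \al_k^{\al/2})$, giving $\frac{\lambda_k}{\al_k}\cdot\al_k^{1/2}$ after applying the $p$-th moment bound — wait, that gives $\lambda_k\al_k^{-1/2}$, so the $\al_k^{1/2}\lambda_k$ term must come from a boundary term $\frac{\lambda_k}{\al_k}\cdot\al_k\cdot(\text{something}) = \lambda_k$ or a direct $L_k(t)$ contribution. (2) For the drift parts, which are integrals $\intot(\cdots)ds$ of quantities already estimated in Lemma~\ref{H1norm} (namely $\mathbb{E}|\uek(s)|^p\lesssim\lambda_k^p\al_k^{-p/2}$ for $p<\al$), use Fubini and Jensen to pass the supremum outside, yielding contributions of order $\lambda_k\al_k^{-1/2}\lesssim\lambda_k$ (absorbed) or, when $\al_k$ appears multiplying, $\al_k^{1/2}\lambda_k$. (3) For the martingale/stochastic-integral part, apply a maximal inequality for $\al$-stable stochastic integrals (Doob-type, or the Rosenthal/Novikov-type bound underlying \cite[Theorem 3.2]{RW06}) to bring the supremum inside, then estimate the resulting $L^\al$-norm of the kernel exactly as in the case analysis of Lemma~\ref{H1norm}; the $\frac{1}{k^{1+\gamma}}$ and $k^{1+\gamma}\lambda_k^2$ terms will emerge from a Chebyshev/interpolation split of the stable-integral tail at a level chosen to balance the two (this is the standard device for getting an $L^1$ bound on a process with only $p<\al$ moments: write the stochastic integral $N_k$ as $N_k\mathbf{1}_{|N_k|\le a_k}+N_k\mathbf{1}_{|N_k|>a_k}$, bound the first by $a_k$ and the second via the $p$-th moment over $a_k^{p-1}$, then optimize over $a_k$, which with $p\to\al$ and the scaling produces a polynomial-in-$k$ loss $k^{1+\gamma}$ exactly matched to Assumption~($\mathbf{A_1}$)).

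I expect the main obstacle to be step (3): obtaining a \emph{supremum-in-time} bound in $L^1$ for the $\al$-stable stochastic convolution when $\al$-stable processes have no finite second moment and no Garcia--Rademich--Rumsey tool is available (as the authors note). The naive route — Doob's inequality — fails since the process is not $L^2$. The fix is to use a weak-type $(p,p)$ maximal inequality valid for $1\le p<\al$ together with the truncation argument above, accepting the polynomial factor $k^{1+\gamma}$ in exchange for an honest $L^1$ bound; the bookkeeping that reconciles the kernel estimates from Lemma~\ref{H1norm} (which are uniform in $\ep$ and give $\al_k^{-1/2}$) with the truncation level and with the boundary terms from the integration by parts is where the stated four-term bound is assembled. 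A secondary subtlety is that the integration-by-parts boundary term involves $\fek(t)$ and $g^\ep_k(t)$ evaluated at the running time $t$, whose suprema over $t$ must be controlled uniformly in $\ep\in(0,1]$ using Lemma~\ref{anODEresult}; the exponential decay $e^{-t/2\ep}$ makes these bounded, contributing the $\al_k^{1/2}\lambda_k$ and $\lambda_k$ terms.
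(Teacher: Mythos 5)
Your high-level skeleton matches the paper's: split $\vek$ (hence $\uek=\intot\vek\,ds$) into a drift part driven by $\al_k\uek$ --- whose time integral is controlled by the $p=1$ moment bound of Lemma \ref{H1norm} and produces the $\al_k^{1/2}\lambda_k$ term --- and a stochastic part for which a supremum-in-time $L^1$ bound is needed. That much is right, and your step (2) is essentially the paper's treatment of $\wek_1$. But the core of the lemma is exactly the step you defer, and the device you propose for it does not close. Your truncation argument ($N=N\mathbf{1}_{|N|\le a_k}+N\mathbf{1}_{|N|>a_k}$, bound the tail by $\mathbb{E}(\sup_t|N|)^p/a_k^{p-1}$) presupposes an $L^p$ or weak-type maximal inequality for the $\al$-stable stochastic convolution with $p>1$, which is not supplied by \cite[Theorem 3.2]{RW06} (a fixed-time bound) and is essentially the statement you are trying to prove. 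Your other suggestion, It\^o's formula on $\al_k\uek^2+\ep\vek^2$, is a dead end here: the driving noise is $\al$-stable with $\al<2$, so these energies have infinite expectation. You also reverse-engineer the $k^{1+\gamma}$ factors from the statement rather than derive them; your guess that they come from optimizing a truncation level is not how they arise.

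The paper's actual mechanism for the stochastic part is different and worth internalizing. After the splitting, the only genuine stochastic convolution is the Ornstein--Uhlenbeck process $\wek_3(t)=\lambda_k\intot e^{-\ep^{-1}(t-s)}dL_k(s)$, and by (\ref{generalestimateforsecondterm}) the quantity to control is $\ep^\theta[\lambda_kL_k(t)-\wek_3(t)]$; the $\lambda_k\mathbb{E}\supt|L_k(t)|$ piece contributes the $\lambda_k$ term. For $\mathbb{E}\supt|\wek_3(t)|$ the paper applies the jump It\^o formula to the \emph{smoothed absolute value} $F_k(x)=(x^2+a(k))^{1/2}$ with $a(k)=k^{-2-2\gamma}$, together with the L\'evy--It\^o decomposition at the \emph{fixed} jump level $1$: the drift term is nonpositive; the compensated small-jump martingale is genuinely square-integrable (jumps bounded by $1$), so Burkholder--Davis--Gundy applies and yields $\lesssim\lambda_k$; the large-jump term has finite first moment since $\al>1$ and yields $\lesssim\lambda_k$; and the compensator correction costs $|F_k''|\le a(k)^{-1/2}$ times $\lambda_k^2\int_{|x|<1}x^2\nu_1(dx)$, i.e.\ $k^{1+\gamma}\lambda_k^2$, while the initial value $F_k(0)=a(k)^{1/2}$ contributes $k^{-1-\gamma}$. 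So the polynomial-in-$k$ loss is the price of smoothing $|x|$, balanced against Assumption ($\mathbf{A_1}$) --- not the output of a Chebyshev interpolation of the integral's tail. Without this (or some equivalent honest maximal bound for $\wek_3$), your argument has a genuine gap at precisely the point you flag as the main obstacle.
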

		\begin{proof}
			We prove the lemma by a splitting technique, which is a finite dimensional version of (\ref{bveq}). Consider the following equations
			\begin{equation}\label{compodecompo}%componentwize-decomposition
				\begin{cases}
					\dot{w}^{\ep,k}_1(t)=-\ep^{-1}[\wek_1(t)+\al_k\uek(t)], \enspace\wek_1(0)=0,\\
					\dot{w}^{\ep,k}_2(t)=-\ep^{-1}\wek_2(t)+\ep^{-\frac{1}{\al}}\lambda_kL_k(t), \enspace\wek_2(0)=0.
				\end{cases}
			\end{equation}
			By a direct computation,
			\begin{equation}\label{compodecompo2}
				\dot{u}^\ep_k=\vek=\wek_1+\ep^{\theta+\frac{1}{\al}-1}\wek_2.
			\end{equation}
			From (\ref{compodecompo2}),
			$$\uek(t)=\intot\wek_1(s)ds+\ep^{\theta+\frac{1}{\al}-1}\intot\wek_2(s)ds.$$
			so$$\uek(t)\leq\int_0^{T}|\wek_1(s)|ds+\ep^{\theta+\frac{1}{\al}-1}\supt\Big|\intot\wek_2(s)ds\Big|,$$
			for all $0\leq t\leq T.$ Taking expectation,
			\begin{equation}\label{generalestimateforsupu}
				\mathbb{E}\supt|\uek(t)|\leq\int_{0}^{T}\mathbb{E}|\wek_1(t)|dt+\ep^{\theta+\frac{1}{\al}-1}\mathbb{E}\supt\Big|\intot\wek_2(t)ds\Big|.
			\end{equation}
			By (\ref{compodecompo}),$$\wek_1(t)=-\ep^{-1}e^{-\ep^{-1}t}\intot\al_ke^{\ep^{-1}s}\uek(s)ds.$$
			Taking expectation and applying Lemma \ref{H1norm},
			\begin{eqnarray}\label{esforw1}
				&&\mathbb{E}|\wek_1(t)|\nonumber\\&\leq&\ep^{-1}e^{-\ep^{-1}t}\al_k\intot e^{\ep^{-1}s}\mathbb{E}|\uek(s)|ds\nonumber\\&\leq&\ep^{-1}e^{-\ep^{-1}t}\al_k\intot e^{\ep^{-1}s}\frac{\lambda_k}{\al_k^{1/2}}ds\nonumber\\&=&\ep^{-1}e^{-\ep^{-1}t}\al_k^{1/2}\lambda_k\intot e^{\ep^{-1}s}ds\nonumber\\&\leq&\al_k^{1/2}\lambda_k.
			\end{eqnarray}
			By (\ref{compodecompo}), $$\wek_2(t)=-\ep^{-1}\intot\wek_2(s)ds+\ep^{-\frac{1}{\al}}\lambda_kL_k(t),$$
			we have
			\begin{equation}\label{generalestimateforsecondterm}
				\ep^{\theta+\frac{1}{\al}-1}\intot\wek_2(s)ds=\ep^{\theta}[\lambda_kL_k(t)-\ep^{\frac{1}{\al}}\wek_2(t)].
			\end{equation} 
			Let $\wek_3:=\ep^{\frac{1}{\al}}\wek_2$. By (\ref{compodecompo}),
			$$\dot{w}^{\ep,k}_3=-\ep^{-1}\wek_3+\lambda_k\dot{L_k},\enspace \wek_3(0)=0.$$
			Set $a(k):=\frac{1}{k^{2+2\gamma}}$ and $F_k(x):=(x^2+a(k))^{1/2}.$ It is straightforward that
			\begin{equation}\label{precisedotF}
				F^\prime_k(x)=\frac{x}{(x^2+a(k))^{1/2}},\quad F^{\prime\prime}_k(x)=\frac{a(k)}{(x^2+a(k))^{3/2}},
			\end{equation}
			so that
			\begin{equation}\label{propertyofF}
				|F^\prime_k(x)|\leq 1,\quad |F^{\prime\prime}_k(x)|\leq \frac{1}{a(k)^\frac{1}{2}}.
			\end{equation}
			By It\^o formula,
			\begin{eqnarray}\label{generalestimateforrho}
				&&F_k(\wek_3(t))-a(k)^{1/2}\nonumber\\&=&-\ep^{-1}\intot F^\prime_k(\wek_3(s))\wek_3(s)ds\nonumber\\&&+\intotxl F_k(\wek_3(s-)+\lambda_kx)-F_k(\wek_3(s-))\tilde{N}_k(dsdx)\nonumber\\&&+\intotxg F_k(\wek_3(s-)+\lambda_kx)-F_k(\wek_3(s-))N_k(dxdx)\nonumber\\&&+\intotxl F_k(\wek_3(s)+\lambda_kx)-F_k(\wek_3(s))-F^\prime_k(\wek_3(s))\nu_k(dx)ds\nonumber\\&&=:\sum\limits_{i=1}^{4}I_i(t).
			\end{eqnarray}
		By (\ref{precisedotF}),
		\begin{equation}\label{esforI1}
			\mathbb{E}\supt I_1(t)=\mathbb{E}\supt\Big(-\ep^{-1}\intot\frac{|\wek_3(s)|^2}{(|\wek_3(s)|^2+a(k))^{1/2}}ds\Big)\leq 0.
		\end{equation}
	By Burkholder-Davis-Gundy inequality, Jensen inequality, Taylor formula and (\ref{propertyofF})
	\begin{eqnarray}\label{esforI2}
		&&\mathbb{E}\supt|I_2(t)|\nonumber\\&\leq&\mathbb{E}\Big[\intobtxl|F_k(\wek_3(s-)+\lambda_kx)-F_k(\wek_3(s-))|^2N_k(dsdx)\Big]^{1/2}\nonumber\\&\leq&\Big[\mathbb{E}\intobtxl|F_k(\wek_3(s-)+\lambda_kx)-F_k(\wek_3(s-))|^2N_k(dsdx)\Big]^{1/2}\nonumber\\&=&\Big[\mathbb{E}\intobtxl|F_k(\wek_3(s-)+\lambda_kx)-F_k(\wek_3(s-))|^2\nu_k(dx)ds\Big]^{1/2}\nonumber\\&=&\Big[\mathbb{E}\intobtxl|F^\prime_k(\wek_3(s-)+\theta\lambda_kx)\lambda_kx|^2\nu_k(dx)ds\Big]^{1/2}\nonumber\\&\leq&\Big[\mathbb{E}\intobtxl\lambda_k^2x^2\nu_k(dx)ds\Big]^{1/2}\nonumber\\&=&\lambda_k\Big(\int_{|x|<1}x^2\nu_1(dx)\Big)^{1/2}T^{1/2},
	\end{eqnarray}
	\begin{eqnarray}\label{esforI3}
		&&\mathbb{E}\supt|I_3(t)|\nonumber\\&\leq&\mathbb{E}\intobtxg|F_k(\wek_3(s-)+\lambda_kx)-F_k(\wek_3(s-))|N_k(dsdx)\nonumber\\&=&\mathbb{E}\intobtxg|F_k(\wek_3(s-)+\lambda_kx)-F_k(\wek_3(s-))|\nu_k(dx)ds\nonumber\\&\leq&\mathbb{E}\intobtxg|F^\prime_k(\wek_3(s-)+\theta\lambda_kx)\lambda_kx|\nu_k(dx)ds\nonumber\\&\leq&\mathbb{E}\intobtxg\lambda_k|x|\nu_k(dx)ds\nonumber\\&=&\lambda_k\int_{|x|\geq 1}|x|\nu_1(dx)T,
		\end{eqnarray}
		and
		\begin{eqnarray}\label{esforI4}
			&&\mathbb{E}\supt|I_4(t)|\nonumber\\&\leq&\mathbb{E}\intobtxl|F_k(\wek_3(s)+\lambda_kx)-F_k(\wek_3(s))-F^\prime_k(\wek_3(s))\lambda_kx|\nu_k(dx)ds\nonumber\\&=&\mathbb{E}\intobtxl F^{\prime\prime}_k(\wek_3(s)+\theta\lambda_kx)\lambda_k^2x^2\nu_k(dx)ds\nonumber\\&\leq&\frac{1}{a(k)^{1/2}}\lambda_k^2\int_{|x|<1}x^2\nu_1(dx)T.
		\end{eqnarray}
		Substituting (\ref{esforI1})-(\ref{esforI4}) into (\ref{generalestimateforrho}),
		\begin{equation*}
			\mathbb{E}\supt F_k(\wek_3(t))\lesssim a(k)^{1/2}+\lambda_k+\frac{\lambda_k^2}{a(k)^{1/2}},
		\end{equation*}
		which means
		\begin{equation}\label{esforrho}%This is a bad label name.
			\mathbb{E}\supt|\ep^{\frac{1}{\al}}\wek_2(t)|\lesssim a(k)^{1/2}+\lambda_k+\frac{\lambda_k^2}{a(k)^{1/2}}.
		\end{equation}
		Next we show that
		\begin{equation}\label{esforl}
		\mathbb{E}\supt|L_k(t)|\lesssim1,
		\end{equation}
		whose proof is a standard argument. In fact, by Burkholder-Davis-Gundy inequality,
			\begin{eqnarray}
				&&\mathbb{E}\supt|L_k(t)|\nonumber\\&\leq&\mathbb{E}\supt\Big|\intotxl x\tilde{N}_k(dsdx)\Big|+\mathbb{E}\supt\Big|\intotxg x{N}_k(dsdx)\Big|\nonumber\\&\leq&\mathbb{E}\sqrt{\int_{0}^{T}\int_{|x|<1}x^2 \tilde{N}_k(dsdx)}+\mathbb{E}\int_{0}^{T}\int_{|x|\geq1}|x|N_k(dsdx)\nonumber\\&\leq&\sqrt{\mathbb{E}\int_{0}^{T}\int_{|x|<1}x^2 \tilde{N}_k(dsdx)}+\mathbb{E}\int_{0}^{T}\int_{|x|\geq1}|x|N_k(dsdx)\nonumber\\&=&\sqrt{T\int_{|x|<1}x^2\nu_1(dx)}+T\int_{|x|\geq1}|x|\nu_1(dx)\nonumber\\&\lesssim&1.
			\end{eqnarray}
	By (\ref{generalestimateforsecondterm}), (\ref{esforrho}) and (\ref{esforl}),
	\begin{equation}\label{esforsecondterm}
		\ep^{\theta+\frac{1}{\al}-1}\mathbb{E}\supt\Big|\intot\wek_2(s)ds\Big| \lesssim a(k)^{1/2}+\lambda_k+\frac{\lambda_k^2}{a(k)^{1/2}}.
	\end{equation}
		The lemma follows from combining estimates (\ref{generalestimateforsupu}), (\ref{esforw1}) and (\ref{esforsecondterm}).
			
		\end{proof}
Now we are ready to give the desired moment estimate for $\ube$.  
\begin{prop}\label{Hnormforub}
	\begin{equation}
		\supe\mathbb{E}\supt||\nabla\ube(t)||\lesssim1.
	\end{equation}
\end{prop}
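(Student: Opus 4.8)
The plan is to peel off from $\ube$ the stochastic linear part $\ue$ treated above, leaving a pathwise-deterministic remainder that carries the nonlinearity, and to close the bound for that remainder by a quadratic energy estimate in which the strictly sublinear growth of $f$ removes the need for any Gr\"onwall argument. Write $\ube=\ue+\hat{u}^{\ep}$. Since the $k$-th Fourier mode of $\ue$ is $\ep^{\theta}\uek$, the bounds $\ep\le1$, $\theta\ge0$ and $\|x\|_{\ell^{2}}\le\|x\|_{\ell^{1}}$ give $\|\nabla\ue(t)\|=\ep^{\theta}\big(\sum_{k}\al_{k}\uek(t)^{2}\big)^{1/2}\le\sum_{k}\al_{k}^{1/2}|\uek(t)|$, while $\hat{u}^{\ep}$ solves the forced wave equation $\ep\hat{u}^{\ep}_{tt}+\hat{u}^{\ep}_{t}=\Delta\hat{u}^{\ep}+f(\ube)$ with $\hat{u}^{\ep}(0)=u_{0}$, $\hat{u}^{\ep}_{t}(0)=v_{0}$. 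Setting $\Gamma:=\supt\sum_{k}\al_{k}^{1/2}|\uek(t)|$, so that $\supt\|\nabla\ue(t)\|\le\Gamma$ and, by Poincar\'e, $\supt\|\ue(t)\|\le\al_{1}^{-1/2}\Gamma$, Lemma \ref{H-1norm} yields
\[
\mathbb{E}\,\Gamma\le\sum_{k}\al_{k}^{1/2}\,\mathbb{E}\supt|\uek(t)|\lesssim\sum_{k}\big(\al_{k}\lambda_{k}+\al_{k}^{1/2}\lambda_{k}+\al_{k}^{1/2}k^{-(1+\gamma)}+\al_{k}^{1/2}k^{1+\gamma}\lambda_{k}^{2}\big)\lesssim1,
\]
where the finiteness is read off from $(\mathbf{A_{1}})$ (for large $k$ the first two summands are bounded by $\al_{k}^{3/2}\lambda_{k}$, and the last two are precisely the summable series appearing in $(\mathbf{A_{1}})$); the implicit constant does not depend on $\ep$.

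Next I would run the standard energy estimate for $\hat{u}^{\ep}$, carried out on Fourier modes so that it reduces to a scalar damped-oscillator ODE computation with continuous forcing $(f(\ube),e_{k})$: multiplying the $k$-th mode equation by $\dot{\hat u}^{\ep}_{k}$, integrating on $[0,t]$, and summing over $k$ gives
\[
\|\nabla\hat{u}^{\ep}(t)\|^{2}\le\ep\|v_{0}\|^{2}+\|\nabla u_{0}\|^{2}+\tfrac12\intot\|f(\ube(s))\|^{2}\,ds.
\]
By $(\mathbf{A_{4}})$ the first two terms are $\lesssim1$, and by $(\mathbf{A_{3}})$, $\|f(\ube(s))\|^{2}\lesssim1+\|\ube(s)\|^{\delta}$. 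Put $b(t):=\sup_{0\le s\le t}\|\nabla\hat{u}^{\ep}(s)\|$; since $\|\ube(s)\|\le\al_{1}^{-1/2}\big(\|\nabla\hat{u}^{\ep}(s)\|+\Gamma\big)$ and the right-hand side above is non-decreasing in $t$, pulling the time-supremum out of the (monotone) integrand gives $b(t)^{2}\lesssim1+b(t)^{\delta}+\Gamma^{\delta}$. Because $0<\delta<\al<2$, Young's inequality absorbs $b(t)^{\delta}$ into $\tfrac12 b(t)^{2}$ — legitimate pathwise, since $b(T)<\infty$ almost surely as $\hat{u}^{\ep}=\ube-\ue$ has continuous $H_{0}^{1}(D)$-trajectories by Proposition \ref{wellposedness} — leaving $b(t)\lesssim1+\Gamma^{\delta/2}$.

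Combining, $\supt\|\nabla\ube(t)\|\le b(T)+\Gamma\lesssim1+\Gamma^{\delta/2}+\Gamma$, whence, taking expectations and using Jensen's inequality (valid as $\delta/2<1$) together with $\mathbb{E}\,\Gamma\lesssim1$,
\[
\supe\mathbb{E}\supt\|\nabla\ube(t)\|\lesssim1+(\mathbb{E}\,\Gamma)^{\delta/2}+\mathbb{E}\,\Gamma\lesssim1,
\]
which is the assertion.

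The main difficulty is structural rather than computational. Because $L$ is $\al$-stable with $\al<2$, the linear part $\ue$ has infinite second moment, so a direct $L^{2}$ energy estimate on $\ube$ is out of reach; the argument must instead use the first-moment, modewise $H^{1}$-bound of Lemma \ref{H-1norm} for the stochastic part and — decisively — the strictly sublinear exponent $\delta<\al<2$ in $(\mathbf{A_{3}})$, which makes the quadratic energy for the remainder $\hat{u}^{\ep}$ self-improving through Young's inequality rather than through a Gr\"onwall estimate (a Gr\"onwall argument would force control of $\mathbb{E}\|\ue\|^{2}$, which is infinite). A secondary, purely technical point — justifying the energy identity for a PDE-weak solution — is why the energy computation is organised mode by mode.
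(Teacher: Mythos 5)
Your proposal is correct, and its skeleton coincides with the paper's: the paper likewise splits $\ube=\ue+\re$, obtains $\supe\mathbb{E}\supt||\nabla\ue(t)||\lesssim1$ from Lemma \ref{H-1norm} and $(\mathbf{A_1})$ exactly as you do with $\Gamma$, and runs the same energy identity for the remainder (multiply by $\re_t$, integrate, Cauchy--Schwarz, then $(\mathbf{A_3})$). The genuine difference lies in how the resulting integral inequality is closed. The paper raises it to the power $\delta/2$, takes suprema and expectations, uses the pointwise-in-time bound $\supt\mathbb{E}||\ue(t)||^{\delta}<\infty$ from Lemma \ref{H1norm}, and applies Gr\"onwall to $t\mapsto\mathbb{E}\sup_{0\le s\le t}||\nabla\re(s)||^{\delta}$; you instead absorb $b(t)^{\delta}$ into $b(t)^{2}$ pathwise by Young's inequality (legitimate, as you note, because $b(T)<\infty$ a.s.\ and $\delta<2$) and only afterwards take expectations, handling $\Gamma^{\delta/2}$ by Jensen. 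Your route buys two things: it dispenses with Gr\"onwall entirely, and it yields the first-moment bound $\mathbb{E}\supt||\nabla\re(t)||\lesssim1$ directly, whereas the paper's Gr\"onwall step controls the $\delta$-th moment and recovers the stated first moment immediately only when $\delta\ge1$. The price is that you need the uniform-in-time control $\mathbb{E}\,\Gamma<\infty$ of the linear part (Lemma \ref{H-1norm} summed against $\al_k^{1/2}$) rather than merely the pointwise-in-time $\delta$-moment; this is available under $(\mathbf{A_1})$ and is in fact already used by the paper to bound $\mathbb{E}\supt||\nabla\ue(t)||$. A trivial remark: the coefficient of $\intot||f(\ube(s))||^{2}\,ds$ in your energy inequality should be $1$ rather than $\tfrac12$, which is immaterial under $\lesssim$.
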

		\begin{proof}
		Set $\re:=\ube-\ue.$ By Lemma \ref{H-1norm} and ($\mathbf{A_1}$),
		\begin{equation}\label{finitemomentofu}
			\supe\mathbb{E}\supt||\nabla \ue(t)||\lesssim1,
		\end{equation}  so it suffices to prove that  
		\begin{eqnarray}\label{esforre}
			\supe\mathbb{E}\supt||\nabla\re(t)||\lesssim1.
		\end{eqnarray}
		By a direct computation,
		\begin{equation}\label{eqforre}
			\ep\re_{tt}+\re_t=\Delta\re+f(\re+\ue),\enspace\re(0)=u_0,\enspace\re_t(0)=v_0.
		\end{equation}
	Multiplying $\re_t$ on both sides,
	$$\ep(\re_{tt},\re_t)+||\re_t||^2=(\Delta \re,\re_t)+(f(\ue+\re),\re_t).$$	
	Integrating with respect to $t$ and rearranging,
	\begin{eqnarray*}
		&&\frac{1}{2}||\nabla\re(t)||^2+\frac{\ep}{2}||\re_t(t)||^2+\intot||\re_t(s)||^2ds\\&=&\frac{1}{2}||u_0||_1^2+\frac{\ep}{2}||v_0||^2+\intot(f(\ue(s)+\re(s)),\re_t(s)) ds.	
	\end{eqnarray*}
	Using Cauchy-Schwarz inequality,
	\begin{eqnarray*}
		&&\frac{1}{2}||\nabla\re(t)||^2+\frac{\ep}{2}||\re_t(t)||^2+\intot||\re_t(s)||^2ds\\&\leq&\frac{1}{2}||u_0||_1^2+\frac{\ep}{2}||v_0||^2+\frac{1}{2}\intot||f(\ue(s)+\re(s))||^2+||\re_t(s)||^2 ds.	
	\end{eqnarray*}
	Rearranging,
	\begin{eqnarray*}
		&&\frac{1}{2}||\nabla\re(t)||^2+\frac{\ep}{2}||\re_t(t)||^2+\frac{1}{2}\intot||\re_t(s)||^2ds\\&\leq&\frac{1}{2}||u_0||_1^2+\frac{\ep}{2}||v_0||^2+\frac{1}{2}\intot||f(\ue(s)+\re(s))||^2ds.	
	\end{eqnarray*}
	In particular,
	\begin{eqnarray*}
		||\nabla\re(t)||^2\lesssim 1+\intot||f(\ue(s)+\re(s))||^2ds.
	\end{eqnarray*}
	By ($\mathbf{A_3}$),
	\begin{eqnarray*}
		&&||\nabla\re(t)||^2\\&\lesssim&1+\intot(1+||\ue(s)+\re(s)||^{\delta/2})^2ds\\&\lesssim&1+\intot(1+||\ue(s)||^{\delta/2}+||\re(s)||^{\delta/2})^2ds\\&\lesssim&1+\intot||\ue(s)||^{\delta}ds+\intot||\re(s)||^{\delta}ds.
	\end{eqnarray*}
	Using elementary inequalities $|a+b+c|^{\delta/2}\leq a^{\delta/2}+b^{\delta/2}+c^{\delta/2}$ and $|a|^{\delta/2}\leq 1+|a|,$
	\begin{eqnarray*}
		&&||\nabla\re(t)||^\delta\\&\lesssim&1+\Big[\intot||\ue(s)||^{\delta}ds\Big]^{\delta/2}+\Big[\intot||\re(s)||^{\delta}ds\Big]^{\delta/2}\\&\lesssim&1+\intot||\ue(s)||^{\delta}ds+\intot||\re(s)||^{\delta}ds\\&\lesssim&1+\int_{0}^{T}||\ue(t)||^{\delta}dt+\int_{0}^{T}||\nabla\re(s)||^{\delta}ds\\&\lesssim&1+\int_{0}^{T}||\ue(t)||^{\delta}dt+\int_{0}^{T}\sup_{0\leq s\leq t}||\nabla\re(s)||^{\delta}dt.
	\end{eqnarray*}
Taking supremum and expectation,
\begin{eqnarray*}
	&&\mathbb{E}\supt||\nabla\re(t)||^\delta\\&\lesssim& 1+\int_{0}^{T}\mathbb{E}||\ue(t)||^{\delta}dt+\int_{0}^{T}\mathbb{E}\sup_{0\leq s\leq t}||\nabla\re(s)||^{\delta}dt.
\end{eqnarray*}
By Lemma \ref{H1norm} and ($\mathbf{A_1}$), 
$$\supe\supt\mathbb{E}||\ue(t)||^\delta<\infty,$$
so that 
\begin{eqnarray*}
	&&\mathbb{E}\supt||\nabla\re(t)||^\delta\\&\lesssim& 1+\int_{0}^{T}\mathbb{E}\sup_{0\leq s\leq t}||\nabla\re(s)||^{\delta}dt.
\end{eqnarray*}
The result follows from Gronwall inequality.
		\end{proof}
		
		\section{Tightness of $(\ube)_{0<\ep\leq 1}$}\label{tightnessofu}
		In this section, we establish the tightness of $(\ube)_{0<\ep\leq 1}$ in $\mathbb{D}([0,T];H^{-1}(D))$. 
		For this purpose, we check the condition in \cite[Theorem 7]{KR22}, which is an infinite dimensional version of the Aldous tightness criterion \cite[Theorem 16.10]{BIL13}. As in Section \ref{momentestimate}, we firstly deal with the linear part $\ue$, and then give estimate to the difference $\re$.
		\begin{lemma}\label{aldousforuek}
			For each $k\in\mathbb{N}$, $\delta\in (0,\infty)$ and stopping time $\tau$ with $\tau+\delta\leq T$,
			 $$\mathbb{E}|\uek(\tau+\delta)-\uek(\tau)|\lesssim\al_k(\al_k^{1/2}\lambda_k+\lambda_k+\frac{1}{k^{1+\gamma}}+k^{1+\gamma}\lambda_k^2)\delta+\lambda_k\sqrt{\delta}.$$
		\end{lemma}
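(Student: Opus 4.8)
The plan is to reduce to $\theta=0$ (the noise being strictly less singular when $\theta>0$) and to work from the identity $\uek(\tau+\delta)-\uek(\tau)=\int_\tau^{\tau+\delta}\vek(s)\,ds$, valid since $\dot{u}^\ep_k=\vek$ and $\uek(0)=0$. Into this I would substitute the splitting (\ref{compodecompo2}), which for $\theta=0$ reads $\vek=\wek_1+\ep^{-1}\wek_3$ with $\wek_3:=\ep^{1/\al}\wek_2$, and then use the integrated $\wek_2$–equation, that is, (\ref{generalestimateforsecondterm}) with $\theta=0$, namely $\ep^{-1}\int_0^t\wek_3(s)\,ds=\lambda_kL_k(t)-\wek_3(t)$. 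This yields
\begin{align*}
\uek(\tau+\delta)-\uek(\tau)=\ &\int_\tau^{\tau+\delta}\wek_1(s)\,ds+\lambda_k\big(L_k(\tau+\delta)-L_k(\tau)\big)\\
&-\big(\wek_3(\tau+\delta)-\wek_3(\tau)\big),
\end{align*}
and I would estimate the three pieces in turn. The point of routing through $\wek_1,\wek_3$ rather than the kernel $\fek/\ep$ is that every object above is evaluated at honest adapted times, so no anticipating stochastic integral ever appears.

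For the first piece, the formula $\wek_1(t)=-\al_k\ep^{-1}e^{-t/\ep}\int_0^te^{s/\ep}\uek(s)\,ds$ from the proof of Lemma \ref{H-1norm} gives the pathwise bound $\sup_{t\le T}|\wek_1(t)|\le\al_k\sup_{t\le T}|\uek(t)|$ (since $\ep^{-1}e^{-t/\ep}\int_0^te^{s/\ep}\,ds\le1$), hence $\big|\int_\tau^{\tau+\delta}\wek_1(s)\,ds\big|\le\delta\,\al_k\sup_{t\le T}|\uek(t)|$; taking expectations and invoking Lemma \ref{H-1norm} produces exactly the $\al_k\big(\al_k^{1/2}\lambda_k+\lambda_k+k^{-(1+\gamma)}+k^{1+\gamma}\lambda_k^2\big)\delta$ term. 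For the Lévy increment, the strong Markov property of $L_k$ gives $L_k(\tau+\delta)-L_k(\tau)\overset{d}{=}L_k(\delta)$, and splitting $L_k$ into its compensated small-jump part and its large-jump part and applying Burkholder–Davis–Gundy exactly as in the derivation of (\ref{esforl}) yields $\mathbb{E}|L_k(\tau+\delta)-L_k(\tau)|\lesssim\sqrt\delta$, contributing $\lesssim\lambda_k\sqrt\delta$.

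The remaining piece, $\wek_3(\tau+\delta)-\wek_3(\tau)$, is the crux. Using that $\wek_3(t)=\lambda_k\int_0^te^{-(t-s)/\ep}\,dL_k(s)$ is the stable Ornstein–Uhlenbeck process, I would write $\wek_3(\tau+\delta)-\wek_3(\tau)=-(1-e^{-\delta/\ep})\wek_3(\tau)+\lambda_k\int_\tau^{\tau+\delta}e^{-(\tau+\delta-s)/\ep}\,dL_k(s)$; the stochastic integral has a predictable integrand bounded by $1$ (it switches on only after $\tau$, when the value of $\tau$ is already known), so the same BDG argument bounds its mean by $\lesssim\lambda_k\sqrt\delta$. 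The hard term is $(1-e^{-\delta/\ep})\wek_3(\tau)$: the naive estimate $(1-e^{-\delta/\ep})\,\mathbb{E}\sup_{t\le T}|\wek_3(t)|$, with the sup controlled by (\ref{esforrho}), is only $O(1)$ in $\ep$ rather than $O(\delta)$, because $\wek_3$ relaxes at rate $\ep^{-1}\gg\al_k$. Recovering the claimed bound needs a finer control of $\wek_3$ at the stopping time — morally an estimate of the shape $\mathbb{E}|\wek_3(\tau)|\lesssim\lambda_k\ep^{1/\al}$, which at a fixed time is immediate from the stable moment inequality used in Lemma \ref{H1norm} but at a stopping time is genuinely delicate — after which $1-e^{-\delta/\ep}\le\min(1,\delta/\ep)$ turns it, in either regime $\ep\le\delta$ or $\ep>\delta$ and using $\al\in(1,2)$ together with $\delta\le T$, into $\lesssim\lambda_k\delta^{1/\al}\lesssim\lambda_k\sqrt\delta$. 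Producing that stopping-time bound for $\wek_3$ — by an optional-stopping/supermartingale argument, or by exploiting the Markov and self-similar structure of the stable OU process — is where I expect the real work to lie; once it is in hand, assembling the three contributions gives the stated inequality.
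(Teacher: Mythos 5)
Your skeleton is essentially the paper's: the identity $\uek(\tau+\delta)-\uek(\tau)=\int_\tau^{\tau+\delta}\wek_1(s)\,ds+\lambda_k\big(L_k(\tau+\delta)-L_k(\tau)\big)-\big(\wek_3(\tau+\delta)-\wek_3(\tau)\big)$ is the paper's split into $I_k+J_k$ in different clothing, and your treatment of the first two pieces (pathwise bound on $\wek_1$ via Lemma \ref{H-1norm}, strong Markov plus BDG for the L\'evy increment) is correct. The gap is exactly where you place it, but it is worse than ``genuinely delicate'': the bound $\mathbb{E}|\wek_3(\tau)|\lesssim\lambda_k\ep^{1/\al}$ that your plan hinges on is \emph{false} for general stopping times. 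The obstruction is the large jumps: $\wek_3$ jumps by $\lambda_k\Delta L_k(t)$ whenever $L_k$ does, and the first jump of $L_k$ of magnitude $\geq 1$ occurs before $T-\delta$ with probability bounded below uniformly in $\ep$ (it is an exponential time of rate $\nu_1(|x|\geq1)$). Take $\tau$ to be that jump time capped at $T-\delta$; since $\wek_3(\tau-)$ is then driven only by small jumps, is of order $\lambda_k\ep^{1/2}$ in $L^2$, and is independent of the incoming jump size, one gets $|\wek_3(\tau)|\geq\lambda_k|\Delta L_k(\tau)|-|\wek_3(\tau-)|\geq\lambda_k/2$ on an event of probability bounded below uniformly in $\ep$, hence $\sup_\tau\mathbb{E}|\wek_3(\tau)|\gtrsim\lambda_k$, not $\lambda_k\ep^{1/\al}$. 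No optional-stopping or self-similarity argument can recover a bound that is simply not true.

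What rescues the lemma --- and what the paper does --- is to apply the L\'evy--It\^o decomposition of $L_k$ \emph{before} evaluating anything at $\tau$, and to estimate the two halves in different $L^p$ norms. The large-jump part of $J_k(\tau+\delta)-J_k(\tau)$ is of finite variation, so its increment is controlled in $L^1$ directly through the compensator of $N_k$: the new jumps on $[\tau,\tau+\delta]$ contribute $\mathbb{E}\int_\tau^{\tau+\delta}\int_{|x|\geq1}|x|\,\nu_1(dx)\,ds\lesssim\delta$, and the relaxation term contributes $\frac{\delta}{\ep}\,\mathbb{E}\big[e^{-\tau/\ep}\int_0^\tau e^{r/\ep}dr\big]\cdot\int_{|x|\geq1}|x|\nu_1(dx)\lesssim\delta$; at no point is $\wek_3(\tau)$ itself required to be small --- only its \emph{increment} is estimated, and the large-jump OU at $\tau$ of size $O(\lambda_k)$ never enters. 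The compensated small-jump part is where the $\sqrt{\delta}$ comes from: after a stochastic Fubini, the analogue of your $(1-e^{-\delta/\ep})\wek_3(\tau)$ term (the paper's $J^{21}_k$) is bounded in $L^2$ by the It\^o isometry, which inserts the factor $\mathbb{E}\int_0^\tau e^{-2(\tau-h)/\ep}dh\leq\ep/2$ inside the expectation; combined with $(1-e^{-\delta/\ep})^2\leq\min(1,\delta^2/\ep^2)$ this yields $\lesssim\delta$ in both regimes $\ep\leq\delta$ and $\ep>\delta$, i.e. $\lambda_k\sqrt{\delta}$ after Jensen. So the missing ingredient in your proposal is not a sharper moment bound for the full stable OU process at a stopping time, but the size-split of the jumps with an $L^1$/compensator argument for the big ones and an $L^2$/isometry argument for the compensated small ones.
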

		\begin{proof}
		Since $$d\vek(t)=\frac{1}{\ep}(-\vek(t)-\al_k\uek(t))dt+\ep^{\theta-1}\lambda_kdL_k(t),$$ we have
		$$\vek(t)=\intot -\frac{\al_k}{\ep}e^{-\ep^{-1}(t-s)}u_k(s)ds+\intot \ep^{\theta-1}\lambda_k e^{-\ep^{-1}(t-s)}dL_k(s),$$ so that \begin{eqnarray}
			&&\uek(t)\nonumber\\&=&\intot \vek(s)ds\\\nonumber&=&\intot\int_{0}^{s}-\frac{\al_k}{\ep}e^{-\ep^{-1}(s-r)}\uek(r)drds+\intot\int_{0}^{s}\ep^{\theta-1}\lambda_k e^{-\ep^{-1}(s-r)}dL_k(r)ds\nonumber\\&=:&I_k(t)+J_k(t).
		\end{eqnarray}
		By Fubini theorem,
		\begin{eqnarray}\label{fubiniargument}
			&&I_k(t)\nonumber\\&=&-\frac{\al_k}{\ep}\intot\int_{r}^{t}e^{-\ep^{-1}(s-r)}\uek(r)dsdr\nonumber\\&=&-\frac{\al_k}{\ep}\intot e^{\ep^{-1}r}\uek(r)\int_{r}^{t}e^{-\ep^{-1}s}dsdr\nonumber\\&=&-\frac{\al_k}{\ep}\intot e^{\ep^{-1}r}\uek(r)(\ep e^{-\ep^{-1}r}-\ep e^{-\ep^{-1}t})dr\nonumber\\&=&-\al_k\intot\uek(r)dr+\al_k\intot e^{-\ep^{-1}(t-r)}\uek(r)dr.
		\end{eqnarray}
		Therefore,
		\begin{eqnarray*}
			&&\frac{1}{\al_k}|I_k(\tau+\delta)-I_k(\tau)|\\&\leq&\int_{\tau}^{\tau+\delta}|\uek(s)|ds+\Big|e^{-\ep^{-1}(\tau+\delta)}\int_{0}^{\tau+\delta}e^{\ep^{-1}s}\uek(s)ds-e^{-\ep^{-1}\tau}\int_{0}^{\tau}e^{\ep^{-1}s}\uek(s)ds\Big|\nonumber\\&\leq&\int_{\tau}^{\tau+\delta}|\uek(s)|ds+\Big|e^{-\ep^{-1}(\tau+\delta)}\int_{0}^{\tau+\delta}e^{\ep^{-1}s}\uek(s)ds-e^{-\ep^{-1}(\tau+\delta)}\int_{0}^{\tau}e^{\ep^{-1}s}\uek(s)ds\Big|\nonumber\\&&+\Big|e^{-\ep^{-1}(\tau+\delta)}\int_{0}^{\tau}e^{\ep^{-1}s}\uek(s)ds-e^{-\ep^{-1}\tau}\int_{0}^{\tau}e^{\ep^{-1}s}\uek(s)ds\Big|\nonumber\\&=&\int_{\tau}^{\tau+\delta}|\uek(s)|ds+e^{-\ep^{-1}(\tau+\delta)}\Big|\int_{\tau}^{\tau+\delta}e^{\ep^{-1}s}\uek(s)ds\Big|\nonumber\\&&+e^{-\ep^{-1}\tau}(1-e^{-\ep^{-1}\delta})\Big|\int_{0}^{\tau}e^{\ep^{-1}s}\uek(s)ds\Big|\nonumber\\&\leq&\int_{\tau}^{\tau+\delta}|\uek(s)|ds+\int_{\tau}^{\tau+\delta}|\uek|ds+\ep^{-1}\delta e^{-\ep^{-1}\tau}\int_{0}^{\tau}e^{\ep^{-1}s}|\uek(s)|ds\nonumber\\&\leq&\supt|\uek(t)|\delta+\supt|\uek(t)|\delta+\supt|\uek(t)|\ep^{-1}\delta e^{-\ep^{-1}\tau}\int_{0}^{\tau}e^{\ep^{-1}s}ds\nonumber\\&\leq&3\supt|\uek(t)|\delta.
		\end{eqnarray*}
		Rearranging the inequality above and taking expectation,
		\begin{equation}
			\mathbb{E}|I_k(\tau+\delta)-I_k(\tau)|\lesssim\al_k\mathbb{E}\supt|\uek(t)|\delta.
		\end{equation}
		By Lemma \ref{H-1norm},
		\begin{equation}\label{tightIk}
			\mathbb{E}|I_k(\tau+\delta)-I_k(\tau)|\lesssim\al_k(\al_k^{1/2}\lambda_k+\lambda_k+\frac{1}{k^{1+\gamma}}+k^{1+\gamma}\lambda_k^2)\delta.
		\end{equation}

		Now we deal with $J_k.$ By the L\'{e}vy-It\^{o} decomposition,
		\begin{eqnarray}\label{tightJkready}
			&&J_k(t)\nonumber\\&=&\ep^{\theta-1}\lambda_k\intot\intos\int_{|x|\geq 1}e^{-\frac{1}{\ep}(s-r)}xN_k(drdx)ds\nonumber\\&&+\ep^{\theta-1}\lambda_k\intot\intos\int_{|x|< 1}e^{-\frac{1}{\ep}(s-r)}x\tilde{N}_k(drdx)ds\nonumber\\&=:&J^1_k(t)+J^2_k(t).
		\end{eqnarray}
		First we deal with $J^1_k.$ By integration-by-part formula for real semimartingale(\cite[page 68]{PRO05}), 
		\begin{eqnarray}\label{tightJ1kready}
			&&J^1_k(t)\nonumber\\&=&\ep^\theta\lambda_k\intot\ep^{-1}e^{-\ep^{-1}s}\intos\int_{|x|\geq 1}e^{\ep^{-1}r}xN_k(drdx)ds\nonumber\\&=&\ep^\theta\lambda_k\intot\intos\int_{|x|\geq 1}e^{\ep^{-1}r}xN_k(drdx)d(-e^{-\ep^{-1}s})\nonumber\\&=&\ep^\theta\lambda_k\Big[-\intot\int_{|x|\geq 1}e^{\ep^{-1}r}xN_k(drdx)e^{-\ep^{-1}t}+\intot e^{-\ep^{-1}s}d\intos\int_{|x|\geq 1}e^{\ep^{-1}r}xN_k(drdx)\Big]\nonumber\\&=&\ep^\theta\lambda_k\Big[-\intot\int_{|x|\geq 1}e^{\ep^{-1}r}xN_k(drdx)e^{-\ep^{-1}t}+\intot\int_{|x|\geq 1}xN_k(dsdx)\Big]\nonumber\\&=:&\ep^\theta\lambda_k(J^{11}_k(t)+J^{12}_k(t)).
		\end{eqnarray}
		For $J^{11}_k,$
		\begin{eqnarray}
		&&\mathbb{E}|J^{11}_k(\tau+\delta)-J^{11}_k(\tau)|\nonumber\\&=&\mathbb{E}|-e^{-\ep^{-1}(\tau+\delta)}\int_{0}^{\tau+\delta}\int_{|x|\geq 1}e^{\ep^{-1}r}xN_k(drdx)+e^{-\ep^{-1}\tau}\int_{0}^{\tau}\int_{|x|\geq 1}e^{\ep^{-1}r}xN_k(drdx)|\nonumber\\&\leq&\mathbb{E}|-e^{-\ep^{-1}(\tau+\delta)}\int_{0}^{\tau+\delta}\int_{|x|\geq 1}e^{\ep^{-1}r}xN_k(drdx)+e^{-\ep^{-1}(\tau+\delta)}\int_{0}^{\tau}\int_{|x|\geq 1}e^{\ep^{-1}r}xN_k(drdx)|\nonumber\\&&+\mathbb{E}|e^{-\ep^{-1}(\tau+\delta)}\int_{0}^{\tau}\int_{|x|\geq 1}e^{\ep^{-1}r}xN_k(drdx)-e^{-\ep^{-1}\tau}\int_{0}^{\tau}\int_{|x|\geq 1}e^{\ep^{-1}r}xN_k(drdx)|\nonumber\\&=:&A+B.
		\end{eqnarray}
		By a direct computation,
		\begin{eqnarray}
			&&A\nonumber\\&=&\mathbb{E}\Big|-e^{-\ep^{-1}(\tau+\delta)}\int_{\tau}^{\tau+\delta}\int_{|x|\geq 1}e^{\ep^{-1}r}xN_k(drdx)\Big|\nonumber\\&\leq&\mathbb{E}\Big[-e^{-\ep^{-1}(\tau+\delta)}\int_{\tau}^{\tau+\delta}\int_{|x|\geq 1}|e^{\ep^{-1}r}x|N_k(drdx)\Big]\nonumber\\&\leq&\mathbb{E}\Big[\int_{\tau}^{\tau+\delta}\int_{|x|\geq 1}|x|N_k(drdx)\Big]\nonumber\\&=&\mathbb{E}\Big[\int_{\tau}^{\tau+\delta}\int_{|x|\geq 1}|x|\nu_k(dx)dr\Big]\nonumber\\&=&\mathbb{E}\Big[\int_{0}^{T}\int_{|x|\geq 1}|x|\mathbb{I}_{[\tau,\tau+\delta]}(r)\nu_1(dx)dr\Big]\nonumber\\&=&\int_{0}^{T}\int_{|x|\geq 1}|x|\mathbb{E}\mathbb{I}_{[\tau,\tau+\delta]}(r)\nu_1(dx)dr\nonumber\\&=&\int_{|x|\geq 1}|x|\nu_1(dx)\int_{0}^{T}\mathbb{E}\mathbb{I}_{[\tau,\tau+\delta]}(r)dr\nonumber\\&=&\int_{|x|\geq 1}|x|\nu_1(dx)\delta\nonumber\\&\lesssim&\delta,
		\end{eqnarray}
		and
		\begin{eqnarray}
			&&B\nonumber\\&=&\mathbb{E}\Big|(e^{-\ep^{-1}\tau}-e^{-\ep^{-1}(\tau+\delta)})\int_{0}^{\tau}\int_{|x|\geq 1}e^{\ep^{-1}r}xN_k(drdx)\Big|\nonumber\\&=&\mathbb{E}\Big|e^{-\ep^{-1}\tau}(1-e^{-\ep^{-1}\delta})\int_{0}^{\tau}\int_{|x|\geq 1}e^{\ep^{-1}r}xN_k(drdx)\Big|\nonumber\\&\leq&\frac{\delta}{\ep}\mathbb{E}\Big|e^{-\ep^{-1}\tau}\int_{0}^{\tau}\int_{|x|\geq 1}e^{\ep^{-1}r}xN_k(drdx)\Big|\nonumber\\&\leq&\frac{\delta}{\ep}\mathbb{E}\Big[\int_{0}^{\tau}\int_{|x|\geq 1}e^{-\ep^{-1}(\tau-r)}|x|N_k(drdx)\Big]\nonumber\\&\leq&\frac{\delta}{\ep}\mathbb{E}\Big[\int_{0}^{\tau}\int_{|x|\geq 1}e^{-\ep^{-1}(\tau-r)}|x|\nu_1(dx)dr\Big]\nonumber\\&=&\frac{\delta}{\ep}\mathbb{E}\Big[\int_{0}^{T}\int_{|x|\geq 1}e^{-\ep^{-1}(\tau-r)}|x|\mathbb{I}_{[0,\tau]}(r)\nu_1(dx)dr\Big]\nonumber\\&=&\frac{\delta}{\ep}\int_{|x|\geq 1}|x|\nu_1(dx)\int_{0}^{T}\mathbb{E}(\mathbb{I}_{[0,\tau]}(r)e^{-\ep^{-1}(\tau-r)})dr\nonumber\\&=&\frac{\delta}{\ep}\int_{|x|\geq 1}|x|\nu_1(dx)\mathbb{E}(e^{-\ep^{-1}\tau}\int_{0}^{\tau}e^{\ep^{-1}r}dr)\nonumber\\&\lesssim&\delta.
		\end{eqnarray}
		By the two estimates above, we have
		\begin{equation}\label{tightJ11k}
			\mathbb{E}|J^{11}_k(\tau+\delta)-J^{11}_k(\tau)|\lesssim\delta.
		\end{equation}
		
		For $J^{12}_k, $by Fubini theorem,
		\begin{eqnarray}\label{tightJ12k}
			&&\mathbb{E}|J^{12}_k(\tau+\delta)-J^{12}_k(\tau)|\nonumber\\&=&\mathbb{E}\Big|\int_{\tau}^{\tau+\delta}\int_{|x|\geq 1}xN_k(dsdx)\Big|\nonumber\\&\leq&\mathbb{E}\int_{\tau}^{\tau+\delta}\int_{|x|\geq 1}|x|N_k(dsdx)\nonumber\\&=&\mathbb{E}\int_{\tau}^{\tau+\delta}\int_{|x|\geq 1}|x|\nu_k(dx)ds\nonumber\\&\leq&\mathbb{E}\int_{0}^{T}\int_{|x|\geq 1}\mathbb{I}_{[\tau,\tau+\delta]}(s)|x|\nu_1(dx)ds\nonumber\\&=&\int_{0}^{T}\int_{|x|\geq1}\mathbb{E}(\mathbb{I}_{[\tau,\tau+\delta]}(s)|x|)\nu_1(dxds)\nonumber\\&=&\int_{|x|\geq 1}|x|\nu_1(dx)\int_{0}^{T}\mathbb{E}\mathbb{I}_{[\tau,\tau+\delta]}(s)ds\nonumber\\&=&\int_{|x|\geq 1}|x|\nu_1(dx)\mathbb{E}\int_{0}^{T}\mathbb{I}_{[\tau,\tau+\delta]}(s)ds\nonumber\\&\lesssim&\delta.
		\end{eqnarray}
		From (\ref{tightJ11k}) and (\ref{tightJ12k}), we have
		\begin{equation}\label{tightJ1k}
			\mathbb{E}|J^1_k(\tau+\delta)-J^1_k(\tau)|\lesssim\ep^{\theta}\lambda_k\delta\leq\lambda_k\delta.
		\end{equation}
		We turn to deal with $J^2_k.$ For notation simplicity, we set $\tilde{L}_k(t):=\intot\int_{|x|< 1}x\tilde{N}_k(dsdx),$ so that $$J^2_k(t)=\lambda_k\intot\int_{0}^{u}\ep^{\theta-1}e^{-\ep^{-1}u}e^{\ep^{-1}h}d\tilde{L}_k(h)du,$$ and
		$$J^2_k(t)-J^2_k(s)=\lambda_k\int_{s}^{t}\int_{0}^{u}\ep^{\theta-1}e^{-\ep^{-1}u}e^{\ep^{-1}h}d\tilde{L}_k(h)du.$$
		By stochastic Fubini theorem(\cite[Theorem 64]{PRO05}),
		$$J^2_k(t)-J^2_k(s)=\lambda_k\Big[\intos\int_{s}^{t}\ep^{\theta-1}e^{-\ep^{-1}u}e^{\ep^{-1}h}dud\tilde{L}_k(h)+\int_{s}^{t}\int_{h}^{t}\ep^{\theta-1}e^{-\ep^{-1}u}e^{\ep^{-1}h}dud\tilde{L}_k(h)\Big].$$
		Therefore,
		\begin{eqnarray}\label{tightJ2kready}
 &&J^2_k(\tau+\delta)-J^2_k(\tau)\nonumber\\&=&\lambda_k\Big[\int_{0}^{\tau}\int_{\tau}^{\tau+\delta}\ep^{\theta-1}e^{-\ep^{-1}u}e^{\ep^{-1}h}dud\tilde{L}_k(h)+\int_{\tau}^{\tau+\delta}\int_{h}^{\tau+\delta}\ep^{\theta-1}e^{-\ep^{-1}u}e^{\ep^{-1}h}dud\tilde{L}_k(h)\Big]\nonumber\\&=:&\lambda_k (J^{21}_k+J^{22}_{k}).
		\end{eqnarray}
		By the definition of $\tilde{L}$, It\^{o} isometry, and H\"older inequality,
		\begin{eqnarray}\label{geforj2k}
			&&\mathbb{E}|J^{21}_k|^2\nonumber\\&=&\mathbb{E}\int_{0}^{\tau}\int_{|x|< 1}\Big|\int_{\tau}^{\tau+\delta}x\ep^{\theta-1}e^{-\ep^{-1}u}e^{\ep^{-1}h}du\Big|^2\nu_1(dx)dh\nonumber\\&=&\int_{|x|< 1}|x|^2\nu_1(dx)\mathbb{E}\int_{0}^{\tau}\Big|\int_{\tau}^{\tau+\delta}\ep^{\theta-1}e^{-\ep^{-1}u}e^{\ep^{-1}h}du\Big|^2dh\nonumber\\&\lesssim&\mathbb{E}\int_{0}^{\tau}\Big|\int_{\tau}^{\tau+\delta}\ep^{\theta-1}e^{-\ep^{-1}u}e^{\ep^{-1}h}du\Big|^2dh\nonumber\\&\leq&\mathbb{E}\int_{0}^{\tau}\Big(\int_{\tau}^{\tau+\delta}\ep^{2\theta-2}e^{-2\ep^{-1}u}e^{2\ep^{-1}h}du\Big)\delta dh\nonumber\\&=&\delta\ep^{2\theta-2}\mathbb{E}\int_{0}^{\tau}\int_{\tau}^{\tau+\delta}e^{-2\ep^{-1}(u-h)}dudh.
		\end{eqnarray}
		A direct calculation yields
		$$\int_{0}^{\tau}\int_{\tau}^{\tau+\delta}e^{-2\ep^{-1}(u-h)}dudh\lesssim\ep^2,$$ so that
		\begin{equation}\label{tightJ21k}
			\mathbb{E}|J^{21}_k|^2\lesssim\delta\ep^{2\theta}\leq\delta.
		\end{equation}
		For $J^{22}_k,$ by It\^{o} isometry,
		\begin{eqnarray}
			&&\mathbb{E}|J^{22}_k|^2\nonumber\\&=&\mathbb{E}\Big|\int_{\tau}^{\tau+\delta}\int_{h}^{\tau+\delta}\ep^{\theta-1}e^{-\ep^{-1}u}e^{\ep^{-1}h}du\tilde{L}(h)\Big|^2\nonumber\\&=&\mathbb{E}\int_{\tau}^{\tau+\delta}\int_{|x|< 1}\Big|\int_{h}^{\tau+\delta}x\ep^{\theta-1}e^{-\ep^{-1}u}e^{\ep^{-1}h}du\Big|^2\nu_1(dx)dh\nonumber\\&=&\int_{|x|< 1}|x|^2\nu_1(dx)\mathbb{E}\int_{\tau}^{\tau+\delta}\Big|\int_{h}^{\tau+\delta}\ep^{\theta-1}e^{-\ep^{-1}u}e^{\ep^{-1}h}du\Big|^2dh.
			\end{eqnarray}
		By a direct calculation,$$\int_{h}^{\tau+\delta}\ep^{\theta-1}e^{-\ep^{-1}u}e^{\ep^{-1}h}du\leq\ep^\theta,$$so
		\begin{equation}\label{tightJ22k}
			\mathbb{E}|J^{22}_k|^2\lesssim\int_{|x|< 1}|x|^2\nu_1(dx)\ep^{2\theta}\delta\lesssim\delta.
		\end{equation}
		Substituting (\ref{tightJ21k}) and (\ref{tightJ22k}) into (\ref{tightJ2kready}),
		\begin{equation*}
			\mathbb{E}|J^2_k(\tau+\delta)-J^2_k(\tau)|^2\lesssim\lambda_k^2\delta.
		\end{equation*}
		By Jensen inequality,
		\begin{equation}\label{tightJ2k}
			\mathbb{E}|J^2_k(\tau+\delta)-J^2_k(\tau)|\lesssim\lambda_k\sqrt{\delta}.
		\end{equation}
		Substituting (\ref{tightJ1k}) and (\ref{tightJ2k}) into (\ref{tightJkready})  \begin{equation}\label{tightJk}
			\mathbb{E}|J_k(\tau+\delta)-J_k(\tau)|\lesssim\lambda_k\sqrt{\delta},
		\end{equation}
		and together with (\ref{tightIk}), we finished the proof. 
		
		\end{proof}
		\begin{coro}\label{aldousforue}
			For each stopping time $\tau$ with $\tau+\delta\leq T,$
			$$\mathbb{E}||\ue(\tau+\delta)-\ue(\tau)||_{-1}\lesssim \sqrt{\delta}.$$
		\end{coro}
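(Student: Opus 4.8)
The plan is to pass from the componentwise estimate in Lemma \ref{aldousforuek} to the $H^{-1}$-norm estimate for $\ue$ by summing over $k$ and using the definition of the $\|\cdot\|_{-1}$ norm. First I would recall that $\ue(t) = \sum_{k\in\mathbb{N}} \uek(t) e_k$, so that
\begin{equation*}
\|\ue(\tau+\delta)-\ue(\tau)\|_{-1}^2 = \sum_{k\in\mathbb{N}} \al_k^{-1}|\uek(\tau+\delta)-\uek(\tau)|^2.
\end{equation*}
By Jensen's inequality, $\mathbb{E}\|\ue(\tau+\delta)-\ue(\tau)\|_{-1} \leq \big(\mathbb{E}\|\ue(\tau+\delta)-\ue(\tau)\|_{-1}^2\big)^{1/2}$, so it suffices to bound the right-hand side above. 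However, controlling the sum of squares directly requires second-moment bounds on $\uek(\tau+\delta)-\uek(\tau)$, which are delicate since the $\al$-stable process has infinite second moment on the large-jump part. The cleaner route is to interchange expectation and summation at the level of first moments: write
\begin{equation*}
\mathbb{E}\|\ue(\tau+\delta)-\ue(\tau)\|_{-1} \leq \sum_{k\in\mathbb{N}} \al_k^{-1/2}\,\mathbb{E}|\uek(\tau+\delta)-\uek(\tau)|,
\end{equation*}
using that $\|v\|_{-1} = (\sum_k \al_k^{-1}v_k^2)^{1/2} \leq \sum_k \al_k^{-1/2}|v_k|$.

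Next I would insert the bound from Lemma \ref{aldousforuek}, which gives
\begin{equation*}
\mathbb{E}|\uek(\tau+\delta)-\uek(\tau)| \lesssim \al_k\Big(\al_k^{1/2}\lambda_k+\lambda_k+\frac{1}{k^{1+\gamma}}+k^{1+\gamma}\lambda_k^2\Big)\delta + \lambda_k\sqrt{\delta}.
\end{equation*}
Multiplying by $\al_k^{-1/2}$ and summing, the $\delta$-coefficient becomes $\sum_k \al_k^{1/2}\big(\al_k^{1/2}\lambda_k+\lambda_k+\frac{1}{k^{1+\gamma}}+k^{1+\gamma}\lambda_k^2\big)$, and the $\sqrt{\delta}$-coefficient becomes $\sum_k \al_k^{-1/2}\lambda_k$. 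Both of these series are finite: the first is dominated by $\sum_k(\al_k^{3/2}\lambda_k + \al_k^{1/2}k^{1+\gamma}\lambda_k^2) + \sum_k \al_k^{1/2}\lambda_k + \sum_k \al_k^{1/2}k^{-(1+\gamma)}$, all of which converge by assumption ($\mathbf{A_1}$) (the middle term is dominated by $\sum_k \al_k^{3/2}\lambda_k$ since $\al_k\geq\al_1>0$, or alternatively by Cauchy--Schwarz against the convergent series in ($\mathbf{A_1}$)), while $\sum_k \al_k^{-1/2}\lambda_k \leq \al_1^{-1}\sum_k \al_k^{1/2}\lambda_k < \infty$. Hence $\mathbb{E}\|\ue(\tau+\delta)-\ue(\tau)\|_{-1} \lesssim \delta + \sqrt{\delta} \lesssim \sqrt{\delta}$ for $\delta \leq T$, which is the claim.

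The main obstacle is not in the summation itself but in making sure the componentwise bound of Lemma \ref{aldousforuek} can be summed against $\al_k^{-1/2}$ with a convergent result; this is precisely what assumption ($\mathbf{A_1}$) is designed to guarantee, so the verification is a bookkeeping check that each of the four resulting series is controlled by the finite sum in ($\mathbf{A_1}$). One should also note that the interchange of $\mathbb{E}$ and $\sum_k$ in the second displayed inequality is justified by Tonelli's theorem since all summands are nonnegative, and the bound obtained is uniform in $\ep \in (0,1]$ because Lemma \ref{aldousforuek} is. A minor point worth stating is that since $\delta$ ranges over $(0,\infty)$ in the hypothesis but $\tau+\delta\leq T$ forces $\delta\leq T$, we may freely absorb $\delta$ into $\sqrt\delta$ up to the constant $\sqrt T$.
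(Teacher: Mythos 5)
Your argument is correct and coincides with the paper's own proof: both pass from Lemma \ref{aldousforuek} to the $H^{-1}$ bound via the triangle inequality $\|v\|_{-1}\leq\sum_k\al_k^{-1/2}|v_k|$, sum the componentwise bounds, and verify convergence of the resulting series using ($\mathbf{A_1}$). The extra remarks about Tonelli, uniformity in $\ep$, and absorbing $\delta$ into $\sqrt\delta$ are fine but add nothing beyond what the paper does implicitly.
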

		\begin{proof}
			By Lemma \ref{aldousforuek},
			\begin{eqnarray*}
				&&\mathbb{E}||(\uek(\tau+\delta)-\uek(\tau))e_k||_{-1}\\&=&\al_k^{-1/2}\mathbb{E}|\uek(\tau+\delta)-\uek(\tau)|\\&\lesssim&\al_k^{1/2}(\al_k^{1/2}\lambda_k+\lambda_k+\frac{1}{k^{1+\gamma}}+k^{1+\gamma}\lambda_k^2)\delta+\al_k^{-1/2}\lambda_k\sqrt{\delta}.
			\end{eqnarray*}
By triangular inequality and $(\mathbf{A_1})$,
$$\mathbb{E}||(\ue(\tau+\delta)-\ue(\tau))||_{-1}\lesssim\delta\sum\limits_{k\in\mathbb{N}}\al_k^{1/2}(\al_k^{1/2}\lambda_k+\lambda_k+\frac{1}{k^{1+\gamma}}+k^{1+\gamma}\lambda_k^2)+\sqrt{\delta}\sum\limits_{k\in\mathbb{N}}\lambda_k\lesssim\sqrt{\delta}.$$
		\end{proof}
		
		\begin{lemma}\label{aldousforre}
			For each stopping time $\tau$ with $\tau+\delta\leq T,$
			$$\mathbb{E}||\re(\tau+\delta)-\re(\tau)||_{-1}\lesssim \delta.$$
		\end{lemma}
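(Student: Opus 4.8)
The plan is to carry out, directly in $H^{-1}(D)$, the argument that handled the term $I_k$ in the proof of Lemma~\ref{aldousforuek}. Since equation (\ref{eqforre}) carries no noise, its velocity $\re_t$ solves the \emph{linear} equation $\ep(\re_t)_t+\re_t=G$ in $H^{-1}(D)$, where
$$G(s):=\Delta\re(s)+f(\re(s)+\ue(s))\in H^{-1}(D),\qquad \re_t(0)=v_0.$$
By the variation-of-constants formula (as in Lemma~\ref{expessionforu}), $\re_t(t)=e^{-t/\ep}v_0+\ep^{-1}\intot e^{-(t-s)/\ep}G(s)\,ds$, and integrating this in time and applying Fubini exactly as in (\ref{fubiniargument}) yields
$$\re(t)=u_0+\ep(1-e^{-t/\ep})v_0+\intot G(s)\,ds-\intot e^{-(t-s)/\ep}G(s)\,ds.$$

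First I would form the increment $\re(\tau+\delta)-\re(\tau)$; the $u_0$ term cancels and the remainder splits as
$$\ep e^{-\tau/\ep}(1-e^{-\delta/\ep})v_0+\int_{\tau}^{\tau+\delta}G(s)\,ds-\int_{\tau}^{\tau+\delta}e^{-(\tau+\delta-s)/\ep}G(s)\,ds+\int_{0}^{\tau}e^{-(\tau-s)/\ep}(1-e^{-\delta/\ep})G(s)\,ds.$$
On the first and last terms I use $1-e^{-\delta/\ep}\leq\delta/\ep$, together with $||v_0||_{-1}\leq\al_1^{-1/2}||v_0||$ and $\int_0^\tau e^{-(\tau-s)/\ep}\,ds\leq\ep$; on the third I use $\int_\tau^{\tau+\delta}e^{-(\tau+\delta-s)/\ep}\,ds\leq\delta$; on the second, that its $||\cdot||_{-1}$-norm is at most $\delta\,\supt||G(t)||_{-1}$. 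This produces the pathwise bound
$$||\re(\tau+\delta)-\re(\tau)||_{-1}\lesssim\delta\Big(1+\supt||G(t)||_{-1}\Big).$$

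It then remains to prove $\mathbb{E}\supt||G(t)||_{-1}\lesssim1$. Since $||\Delta\re(t)||_{-1}=||\nabla\re(t)||$ and $||f(\re(t)+\ue(t))||_{-1}\leq||f(\re(t)+\ue(t))||$, assumption~($\mathbf{A_3}$), the Poincar\'e inequality $||u||\leq\al_1^{-1/2}||\nabla u||$ for $u\in H_0^1(D)$, and $|a+b|^{\delta/2}\leq|a|^{\delta/2}+|b|^{\delta/2}$ give
$$||G(t)||_{-1}\lesssim1+||\nabla\re(t)||+||\nabla\re(t)||^{\delta/2}+||\ue(t)||^{\delta/2}.$$
Proposition~\ref{Hnormforub} together with (\ref{finitemomentofu}) bounds $\mathbb{E}\supt||\nabla\re(t)||$, and since $\delta/2<\al/2<1$ Jensen's inequality upgrades this to a bound on $\mathbb{E}\supt||\nabla\re(t)||^{\delta/2}$. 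Finally $\mathbb{E}\supt||\ue(t)||\leq\sum_{k}\mathbb{E}\supt|\uek(t)|\lesssim1$ by Lemma~\ref{H-1norm} together with ($\mathbf{A_1}$) and $\al_k\geq\al_1$ (which make each of the four series in Lemma~\ref{H-1norm} convergent), and Jensen again gives $\mathbb{E}\supt||\ue(t)||^{\delta/2}\lesssim1$. Taking expectations in the pathwise bound above completes the proof.

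The main obstacle, and the reason the Fubini step is essential, is that $\re_t$ has no moment bound uniform in $\ep$: the energy identity in the proof of Proposition~\ref{Hnormforub} controls only $\ep||\re_t(t)||^2$, so one cannot simply write $\re(\tau+\delta)-\re(\tau)=\int_{\tau}^{\tau+\delta}\re_t(s)\,ds$ and bound by $\delta\sup||\re_t||$. Rewriting the increment via Fubini trades the velocity increment for an increment of $\intot G$, where $G$ does enjoy a uniform bound, and it is the cancellation $|e^{-(\tau+\delta-s)/\ep}-e^{-(\tau-s)/\ep}|\leq\frac{\delta}{\ep}e^{-(\tau-s)/\ep}$ — precisely the device used for $I_k$ in Lemma~\ref{aldousforuek} — that yields the linear-in-$\delta$ rate uniformly in $\ep$.
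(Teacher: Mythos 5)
Your proof is correct and follows essentially the same route as the paper: both rewrite the increment of $\re$ via variation of constants and Fubini so that the singular $\ep^{-1}$ prefactor is absorbed into the exponential kernel, exploit the cancellation $|e^{-(\tau+\delta-s)/\ep}-e^{-(\tau-s)/\ep}|\leq\frac{\delta}{\ep}e^{-(\tau-s)/\ep}$, and conclude from the uniform moment bounds of Proposition~\ref{Hnormforub} and (\ref{finitemomentofu}). The only cosmetic differences are that the paper pairs against test functions $\psi$ with $\|\psi\|_1=1$ rather than working directly in $H^{-1}(D)$, and invokes the Lipschitz condition ($\mathbf{A_2}$) on $f$ where you use the growth condition ($\mathbf{A_3}$); both yield the same $\delta\,(1+\sup_t\|\nabla\re\|+\sup_t\|\ube\|)$-type pathwise bound.
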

		\begin{proof}
			Set $\xe:=\re_t$, so that
			\begin{equation}\label{eqforre2}
				\begin{cases}
					\re_t=\xe,\enspace \re(0)=u_0\\
					\xe_t=\frac{1}{\ep}[-\xe+\Delta\re+f(\ube)],\enspace \xe(0)=v_0.
				\end{cases}
			\end{equation}
			From (\ref{eqforre2}), for all $\psi\in H_0^1(D)$ with $||\psi||_1=1$,
			$$(\xe(t),\psi)-(v_0,\psi)=\frac{1}{\ep}\Big[\intot-(\xe(s),\psi)-(\nabla\re(s),\nabla\psi)+(f(\ube(s)),\psi)ds\Big],$$ so that
			$$(\xe(t),\psi)=(v_0,\psi)+\frac{1}{\ep}e^{-\ep^{-1}t}\intot e^{\ep^{-1}s}[-(\nabla\re(s),\nabla\psi)+(f(\ube(s),\psi))]ds.$$
			Similar to (\ref{fubiniargument}), a use of Fubini theorem yields
			\begin{eqnarray*}
				&&(\re(t),\psi)-(u_0,\psi)\\&=&\intot(\xe(s),\psi)ds\\&=&(v_0,\psi)t-\intot(\nabla\re(r),\nabla\psi)+(f(\ube(r)),\psi)dr-\intot e^{-\ep^{-1}(t-r)}[-(\nabla\re(r),\nabla\psi)+(f(\ube(r)),\psi)]dr,
			\end{eqnarray*}
			so that
			\begin{eqnarray}\label{differenceofre}
				&&|(\re(\tau+\delta),\psi)-(\re(\tau),\psi)|\nonumber\\&=&\Big|(v_0,\psi)\delta+\int_{\tau}^{\tau+\delta}-(\nabla\re(r),\nabla\psi)+(f(\ube(r)),\psi)dr\nonumber\\&&-\Big(e^{-\ep^{-1}(\tau+\delta)}\int_{0}^{\tau+\delta}e^{-\ep^{-1}(t-r)}[-(\nabla\re(r),\nabla\psi)+(f(\ube(r),\psi))]dr\nonumber\\&&-e^{-\ep^{-1}\tau}\int_{0}^{\tau}e^{-\ep^{-1}(t-r)}[-(\nabla\re(r),\nabla\psi)+(f(\ube(r),\psi))]dr\Big)\Big|\nonumber\\&\leq&|(v_0,\psi)|\delta+\Big|\int_{\tau}^{\tau+\delta}-(\nabla\re(r),\nabla\psi)+(f(\ube(r)),\psi)dr\Big|\nonumber\\&&+\Big|e^{-\ep^{-1}(\tau+\delta)}\int_{0}^{\tau+\delta}e^{\ep^{-1}s}[-(\nabla\re(r),\nabla\psi)+(f(\ube(r),\psi))]dr\nonumber\\&&-e^{-\ep^{-1}(\tau+\delta)}\int_{0}^{\tau}e^{\ep^{-1}r}[-(\nabla\re(r),\nabla\psi)+(f(\ube(r),\psi))]dr\Big|\nonumber\\&&+\Big|e^{-\ep^{-1}(\tau+\delta)}\int_{0}^{\tau}e^{\ep^{-1}r}[-(\nabla\re(r),\nabla\psi)+(f(\ube(r)),\psi)]dr\nonumber\\&&-e^{-\ep^{-1}\tau}\int_{0}^{\tau}e^{\ep^{-1}r}[-(\nabla\re(r),\nabla\psi)+(f(\ube(r)),\psi)]dr\Big|\nonumber\\&=:&|(v_0,\psi)|\delta+A^\psi+B^\psi+C^\psi.
			\end{eqnarray}
			By Lipschitz property of $f$ and Cauchy-Schwarz inequality,
			\begin{eqnarray}\label{esforApsi}
				&&A^\psi\nonumber\\&\leq&\int_{\tau}^{\tau+\delta}|-(\nabla\re(r),\nabla\psi)+(f(\ube(r),\psi))|dr\nonumber\\&\lesssim&\int_{\tau}^{\tau+\delta}\supt||\nabla\re(t)||\cdot||\nabla\psi||+(1+\supt||\ube(t)||)||\psi||dr\nonumber\\&\lesssim&\int_{\tau}^{\tau+\delta}(1+\supt||\nabla\re(t)||+\supt||\ube(t)||)||\psi||_1\delta\nonumber\\&\lesssim&(1+\supt||\nabla\re(t)||+\supt||\ube(t)||)\delta,
			\end{eqnarray}
			\begin{eqnarray}\label{esforBpsi}
				&&B^\psi\nonumber\\&=&\Big|e^{-\ep^{-1}(\tau+\delta)}\int_{\tau}^{\tau+\delta}e^{\ep^{-1}r}[-(\nabla\re(r),\nabla\psi)+(f(\ube(r),\psi))]dr\Big|\nonumber\\&\leq&\int_{\tau}^{\tau+\delta}|-(\nabla\re(r),\nabla\psi)+(f(\ube(r)),\psi))|dr\nonumber\\&\lesssim&\int_{\tau}^{\tau+\delta}\supt||\nabla\re(t)||\cdot||\nabla\psi||+(1+\supt||\ube(t)||)||\psi||dr\nonumber\\&\lesssim&(1+\supt||\re(t)||+\supt||\ube(t)||)||\psi||_1\delta\nonumber\\&\lesssim&(1+\supt||\re(t)||+\supt||\ube(t)||)\delta,
			\end{eqnarray}
			and
			\begin{eqnarray}\label{esforCpsi}
				&&C^\psi\nonumber\\&=&|e^{-\ep^{-1}(\tau+\delta)}-e^{-\ep^{-1}\tau}|\Big|\int_{0}^{\tau}e^{\ep^{-1}\tau}[-(\nabla\re(r),\nabla\psi)+(f(\ube(r)),\psi)]dr\Big|\nonumber\\&\lesssim&e^{-\ep^{-1}\tau}(1-e^{-\ep^{-1}\delta})\int_{0}^{\tau}e^{\ep^{-1}r}(\supt||\nabla\re(t)||\cdot||\nabla\psi||+(1+\supt||\ube(t)||)||\psi||)\nonumber\\&\lesssim&e^{-\ep^{-1}\tau}\ep^{-1}\delta\int_{0}^{\tau}e^{\ep^{-1}r}(1+\supt||\nabla\re(t)||+\supt||\ube(t)||)||\psi||_1dr\nonumber\\&\leq&(1+\supt||\nabla\re(t)||+\supt||\ube(t)||)\delta.
			\end{eqnarray}
		Substituting (\ref{esforApsi})-(\ref{esforCpsi}) into (\ref{differenceofre}),
		$$|(\re(\tau+\delta)-\re(\tau),\psi)|\lesssim(1+\supt||\nabla\re(t)||+\supt||\ube(t)||)\delta.$$
		The result follows by taking expectation and using (\ref{esforre}) and Proposition \ref{Hnormforub}.
		\end{proof}
		\begin{prop}
			$(\ube)_{0<\ep\leq1}$ is tight in $\mathbb{D}([0,T],H^{-1}(D))$.
		\end{prop}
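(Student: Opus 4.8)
The plan is to verify the hypotheses of the infinite-dimensional Aldous tightness criterion \cite[Theorem 7]{KR22}, namely a compact containment condition together with the Aldous condition on increments over short time intervals. Both ingredients are by now essentially in hand, thanks to Proposition \ref{Hnormforub}, Corollary \ref{aldousforue} and Lemma \ref{aldousforre}.

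First I would establish compact containment. Since $D$ is bounded, the embedding $H_0^1(D)\hookrightarrow H^{-1}(D)$ is compact (factor it through $L^2(D)$ and use Rellich--Kondrachov); hence for each $R>0$ the ball $B_R:=\{u\in H_0^1(D):\|\nabla u\|\leq R\}$ is relatively compact in $H^{-1}(D)$, and we let $K_R$ denote its closure there, a compact set. By Proposition \ref{Hnormforub} and Chebyshev's inequality, for every $t\in[0,T]$ and every $0<\ep\leq1$,
$$
\mathbb{P}\big(\ube(t)\notin K_R\big)\leq\mathbb{P}\big(\|\nabla\ube(t)\|>R\big)\leq\frac{1}{R}\,\mathbb{E}\supt\|\nabla\ube(t)\|\lesssim\frac{1}{R},
$$
uniformly in $\ep$ and $t$; given $\eta>0$, taking $R$ large makes the left-hand side $<\eta$, which is exactly the compact containment condition.

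Next I would verify the Aldous condition. Writing $\ube=\ue+\re$ with $\re=\ube-\ue$, the triangle inequality combined with Corollary \ref{aldousforue} and Lemma \ref{aldousforre} gives, for every $0<\ep\leq1$, every $\delta>0$ and every stopping time $\tau$ with $\tau+\delta\leq T$,
$$
\mathbb{E}\|\ube(\tau+\delta)-\ube(\tau)\|_{-1}\leq\mathbb{E}\|\ue(\tau+\delta)-\ue(\tau)\|_{-1}+\mathbb{E}\|\re(\tau+\delta)-\re(\tau)\|_{-1}\lesssim\sqrt{\delta}+\delta\lesssim\sqrt{\delta}.
$$
By Chebyshev's inequality, $\mathbb{P}\big(\|\ube(\tau+\delta)-\ube(\tau)\|_{-1}\geq\eta\big)\leq C\sqrt{\delta}/\eta$ for every $\eta>0$, uniformly in $\ep$ and in the stopping time $\tau$; this can be made arbitrarily small by choosing $\delta$ small, which is precisely the Aldous condition.

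Combining the two items and invoking \cite[Theorem 7]{KR22} yields the tightness of $(\ube)_{0<\ep\leq1}$ in $\mathbb{D}([0,T];H^{-1}(D))$. The only point that needs a little care is to match the increment bounds of Corollary \ref{aldousforue} and Lemma \ref{aldousforre} --- stated for stopping times $\tau$ with $\tau+\delta\leq T$ --- to the precise form demanded by the cited criterion; since all the delicate moment estimates were already carried out in Sections \ref{momentestimate} and \ref{tightnessofu}, no genuine obstacle is expected here.
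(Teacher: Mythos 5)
Your proposal is correct and follows essentially the same route as the paper: both verify the criterion of \cite[Theorem 7]{KR22} by combining the uniform moment bound of Proposition \ref{Hnormforub} (for compact containment, via the compact embedding $H_0^1(D)\hookrightarrow H^{-1}(D)$ and Chebyshev) with the increment estimates of Corollary \ref{aldousforue} and Lemma \ref{aldousforre} through the decomposition $\ube=\ue+\re$ (for the Aldous condition). Your write-up merely spells out the Chebyshev/compact-embedding steps that the paper leaves implicit.
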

		\begin{proof}
		By \cite[Theorem 7]{KR22} and the Markov's inequality, it is sufficient to prove the following two inequalities:
		
		(i) $\supe\mathbb{E}\supt||\ube(t)||<\infty.$
		
		(ii) There exists a function $f$ which is defined on $\mathbb{R}_+$ and which is independent of $\ep,$ such that for each $\delta>0$ and stopping time $\tau$ with $\tau+\delta\leq T$, $$\mathbb{E}||\ube(\tau+\delta)-\ube(\tau)||_{-1}\leq f(\delta),$$ and $f(\delta)\downarrow 0$ as $\delta\downarrow 0.$
		
		Note that (i) is precisely the statement of Proposition \ref{Hnormforub}, and
		since $\ube=\ue+\re,$ (ii) follows by combining Corollary \ref{aldousforue} and Lemma \ref{aldousforre}.
		
		\end{proof}
		\section{Proof of the Main Result}\label{proofofthemainresult}
		\emph{Proof of Theorem \ref{mainresult}:}
		
		Given a test function $\phi\in C^2([0,T]\times D)$ with $\phi|_{\pa D}=0$ by (\ref{decomposition}),
		\begin{eqnarray}
			&&(\ube(t),\phi(t))-(u_0,\phi(0))\nonumber\\&=&\intot(\vbe(s),\phi(s))ds+\intot(\ube(s),\phi_t(s))ds\nonumber\\&=&\ep^{-1}\intot(\bvbe_1(s),\phi(s))ds+\intot(\bvbe_2(s),\phi(s))ds+\ep^{\theta+\frac{1}{\al}-1}\intot(\bvbe_3(s),\phi(s))ds+\intot(\ube(s),\phi_t(s))ds\nonumber.
		\end{eqnarray}
		But from (\ref{bveq}),
		\begin{eqnarray}
			&&(\bvbe_2(t),\phi(t))\nonumber\\&=&-\ep^{-1}\intot(\bvbe_2(s)-f(\ube(s)),\phi(s))ds-\ep^{-1}\intot(\nabla\ube(s),\nabla\phi(s))ds+\intot(\bvbe_2(s),\phi_t(s))ds\nonumber\\&=&-\ep^{-1}\intot(\bvbe_2(s),\phi(s))ds-\ep^{-1}\intot(\nabla\ube(s),\nabla\phi(s))ds+\ep^{-1}\intot(f(\ube(s),\phi(s)))ds\nonumber\\&&+\intot(\bvbe_2(s),\phi_t(s))ds.\nonumber
		\end{eqnarray}
		Combining the two equalities above,
		\begin{eqnarray}
			&&(\ube(t),\phi(t))-(u_0,\phi(0))\nonumber\\&=&\intot(\ube(s),\phi_t(s))ds-\intot(\nabla\ube(s),\nabla\phi(s))ds+\ep^{-1}\intot(\bvbe_1(s),\phi(s))ds\nonumber\\&&+\ep\Big[\intot(\bvbe_2(s),\phi_t(s))ds-(\bvbe_2(t),\phi(t))\Big]+\intot (f(\ube(s)),\phi(s))ds\nonumber\\&&+\ep^{\theta+\frac{1}{\al}-1}\intot(\bvbe_3(s),\phi(s))ds.
		\end{eqnarray}
		Rearranging the equality above,
		\begin{eqnarray}\label{generalestimateforR}
			&&R^\ep(t)\nonumber\\&:=&(\ube(t),\phi(t))-(u_0,\phi(0))-\intot(\ube(s),\phi_t(s))ds\nonumber\\&&+\intot(\nabla\ube(s),\nabla\phi(s))ds-\intot (f(\ube(s)),\phi(s))ds-\ep^{\theta}\intot(\phi(s),dL(s))\nonumber\\&=&\ep^{-1}\intot(\bvbe_1(s),\phi(s))ds+\ep\Big[\intot(\bvbe_2(s),\phi_t(s))ds-(\bvbe_2(t),\phi(t))\Big]\nonumber\\&&+\Big[\ep^{\theta+\frac{1}{\al}-1}\intot(\bvbe_3(s),\phi(s))ds-\ep^\theta\intot(\phi(s),dL(s))\Big]\nonumber\\&=:&J_1(t)+J_2(t)+\ep^\theta J_3(t).
		\end{eqnarray}
		We solve $\bvbe_1$ from (\ref{bveq}) as
		$$\bvbe_1(t)=\ep e^{-\ep^{-1}t}v_0,$$ so that
		\begin{eqnarray}\label{noname}
			&&\ep^{-1}\intot(\bvbe_1(s),\phi(s))ds\nonumber\\&=&\ep^{-1}\intot(\ep e^{-\ep^{-1}s}v_0,\phi(s))ds\nonumber\\&=&\intot e^{-\ep^{-1}s}(v_0,\phi(s))ds\nonumber\\&\lesssim&\intot e^{-\ep^{-1}s}ds\nonumber\\&\leq&\ep,
		\end{eqnarray}
		that is,
		\begin{equation}\label{esforJ1}
			E|J_1(t)|\lesssim\ep.
		\end{equation}
		Let us deal with $J_2$. From (\ref{bveq}), it is straightforward that for each $\psi\in H_0^1$ with $||\psi||_1=1,$
		$$(\bvbe_2(t),\psi)=\ep^{-1}e^{-\ep^{-1}t}\intot e^{\ep^{-1}s}[-(\nabla\ube(s),\nabla\psi)+(f(\ube(s),\psi))]ds.$$
		By the Lipschitz property of $f$,
		\begin{eqnarray}\label{noname1}
			&&|(\bvbe_2(t),\psi)|\nonumber\\&\leq&\ep^{-1}e^{-\ep^{-1}t}\intot e^{\ep^{-1}s}(||\nabla\ube(s)||\cdot||\nabla\psi||+||f(\ube(s))||\cdot||\psi||)ds\nonumber\\&\lesssim&\ep^{-1}e^{-\ep^{-1}t}\intot e^{\ep^{-1}s}(\supt||\nabla \ube(t)||+1+\supt||\ube(t)||)ds\nonumber\\&\lesssim&\supt||\nabla\ube(t)||+1.
		\end{eqnarray}
		By Fubini theorem and Proposition \ref{Hnormforub},
		\begin{eqnarray*}
			&&\mathbb{E}||\bvbe_2(t)||_{-1}\nonumber\\&=&\mathbb{E}\sup\limits_{||\psi||_1=1}|(\bvbe_2(t),\psi)|\nonumber\\&\lesssim&\mathbb{E}(\supt||\nabla\ube(t)||+1)\\&\lesssim&1,
		\end{eqnarray*}
		from which
		\begin{eqnarray}
			&&\mathbb{E}\Big|\intot(\bvbe_2(s),\phi_t(s))ds\Big|\nonumber\\&\leq&\mathbb{E}\intot||\bvbe_2(s)||_{-1}||\phi_t(s)||_1ds\nonumber\\&\lesssim&\mathbb{E}\intot||\bvbe_2(s)||_{-1}ds\nonumber\\&=&\intot\mathbb{E}||\bvbe_2(s)||_{-1}ds\nonumber\\&\leq&T\supt\mathbb{E}||\bvbe_2(t)||_{-1}\nonumber\\&\lesssim&1,
		\end{eqnarray}
		and
		\begin{eqnarray}
			&&\mathbb{E}|(\bvbe_2(t),\phi(t))|\nonumber\\&\leq&\mathbb{E}||\bvbe_2(t)||_{-1}||\phi(t)||_1\nonumber\\&\lesssim&\supt\mathbb{E}||\bvbe_2(t)||_{-1}\nonumber\\&\lesssim&1,
		\end{eqnarray}so
		\begin{equation}\label{esforJ2}
			\mathbb{E}|J_2(t)|\lesssim\ep.
		\end{equation}
		By (\ref{decomposition}).
		\begin{eqnarray}
			&&(\bvbe_3(t),\phi(t))\nonumber\\&=&\intot(\bvbe_{3,t}(s),\phi(s))ds+\intot(\bvbe_3(s),\phi_t(s)) ds\nonumber\\&=&-\ep^{-1}\intot(\bvbe_3(s),\phi(s))ds+\ep^{-\frac{1}{\al}}\intot(\phi(s),dL(s))+\intot(\bvbe_3(s),\phi_t(s))ds,
		\end{eqnarray}
		which is equivalent to say that
		\begin{eqnarray}
			&&J_3(t)\nonumber\\&=&\ep^{\frac{1}{\al}-1}\intot(\bvbe_3(s),\phi(s))ds-\intot(\phi(s),dL(s))\nonumber\\&=&\ep^{\frac{1}{\al}}\intot(\bvbe_3(s),\phi_t(s))ds-\ep^{\frac{1}{\al}}(\bvbe_3(t),\phi(t)).
		\end{eqnarray}
		Set $\bvbe_{3,k}(t):=(\bvbe_3(t),e_k)$, it is straightforward that
		$$\ep^{\frac{1}{\al}}\bvbe_{3,k}(t)=\lambda_k\intot e^{-\ep^{-1}(t-s)}dL_k(s).$$
		But by \cite[Theorem 3.2]{RW06}
		\begin{eqnarray}
			&&\mathbb{E}\Big|\intot e^{-\ep^{-1}(t-s)}dL_k(s)\Big|\nonumber\\&\lesssim&\Big(\intot e^{-\ep^{-1}\al(t-s)}ds\Big)^{\frac{1}{\al}}\nonumber\\&\lesssim&\ep^\frac{1}{\al},
		\end{eqnarray}implying that
		$$\supe\supt\ep^{\frac{1}{\al}}\mathbb{E}|\bvbe_3(t)|\lesssim\ep^\frac{1}{\al}\sum\limits_{k\in\mathbb{N}}\lambda_k\lesssim\ep^\frac{1}{\al},$$
		so \begin{equation}\label{esforJ3}
			\mathbb{E}|J_3(t)|\lesssim \ep^\frac{1}{\al}.
		\end{equation}
		 By (\ref{generalestimateforR}), (\ref{esforJ1}), (\ref{esforJ2}) and(\ref{esforJ3}),
		\begin{equation}\label{esforR}
			\mathbb{E}|R^\ep(t)|\lesssim\ep+\ep^{\theta+\frac{1}{\al}}.
		\end{equation}
		
		Let us consider the case when $\theta=0$. Denote $\mathbb{D}:=\mathbb{D}([0,T];H^{-1}(D)).$ By\cite{GK96}, in order to prove that $(\ube)$ converges in probability as $\ep\to 0$, it is sufficient to prove that for any subsequences $\{\ep(n)\}$ and $\{\mu(n)\},$ there exists subsequences $\{\ep(n_k)\}$ and $\{\mu(n_k)\}$ such that $(U^{\ep(n_k)},U^{\mu(n_k)})$ converges to a $\mathbb{D}^2$-valued random variable $w=(w_1,w_2)$, and $w$ is supported on the diagonal.
		
		Now given two subsequences $\{\ep(n)\}$ and $\{\mu(n)\},$ since $(\ube)_{0<\ep\leq 1}$ is  tight, by Prokhorov and Skorokhod theorem, there exist
		
		(i) a probability space $(\hat{\Omega},\hat{\mathcal{F}},\hat{P}),$
		
		  (ii) a sequence of $\mathbb{D}^2\times\mathbb{D}([0,T],L^2(D))$-valued random variable $(u_1^k,u_2^k,\hat{L}_k)$ defined on $(\hat{\Omega},\hat{\mathcal{F}},\hat{P}),$
		 
		  (iii) a $\mathbb{D}^2\times\mathbb{D}([0,T],L^2(D))$-valued random variable $(u_1,u_2,\hat{L})$ defined on $(\hat{\Omega},\hat{\mathcal{F}},\hat{P}),$
		 
		 \noindent such that 
		 %\\ \hspace*{\fill} \\
		 
		 (i) $(u_1^k,u_2^k,\hat{L}_k)\overset{d}{=}(U^{\ep(n_k)},U^{\mu(n_k)},L)$
		 
		  (ii) $(u_1^k,u_2^k,\hat{L}_k)\to (u_1,u_2,\hat{L})$, $\hat{P}-$ a.s..
		 
		  From (\ref{generalestimateforR}) and (\ref{esforR}) we see that
		\begin{equation}\label{relation}
			(\ube(t),\phi(t))=(u_0,\phi(0))+\intot(\ube(s),\phi_t(s)+\Delta \phi(s))ds+\intot(f(\ube(s),\phi(s)))ds+\intot(\phi(s),dL(s))+R^\ep(t),
		\end{equation}
		and for each $0\leq t\leq T,$ $$\mathbb{E}|R^\ep(t)|\lesssim \ep^{\frac{1}{\al}}.$$
		As in (\ref{generalestimateforR}), set for $i=1,2$ and $k\in\mathbb{N},$ $$R^k_i(t):=(u^k_i(t),\phi(t))-(u_0,\phi(0))-\intot(u^k_i(s),\phi_t(s)+\Delta \phi(s))ds-\intot(f(u^k_i(s)),\phi(s))ds-\intot(\phi(s),d\hat{L}_k(s)).$$
		Since $(u_1^k,u_2^k,\hat{L}_k)\overset{d}{=}(U^{\ep(n_k)},U^{\mu(n_k)},L),$ we have
		$$\mathbb{E}|R^k_1(t)|\lesssim[\ep(n_k)]^\frac{1}{\al},\enspace \mathbb{E}|R^k_2(t)|\lesssim[\mu(n_k)]^\frac{1}{\al},$$
		and in particular,
		$$\lim\limits_{k\to\infty}\mathbb{E}|R^k_i(t)|=0,\enspace i=1,2.$$
		Set $\mathcal{D}_i:=\{t\in[0,T]:P(u_i(t)\neq u_i(t-))\}$ and $\mathcal{D}:=\mathcal{D}_1\cup\mathcal{D}_2$. From the c\`{a}dl\`ag property of $u_i$, $\mathcal{D}$ is at most countable\cite[Lemma 7.7, Chapter 3]{EK09}. For each $t\notin\mathcal{D},$ $u^k_i(t)\to u_i(t),$ $\tilde{P}-a.s.,$ so $(u^k_i(t),\phi(t))\to (u_i(t),\phi(t))$, $\tilde{P}-a.s..$ Together with dominated convergence theorem, we conclude that for each $t\notin\mathcal{D},$
		\begin{equation}\label{heatequation}
			(u_i(t),\phi(t))=(u_0,\phi(0))+\intot(u_i(s),\phi_t(s)+\Delta u_i(s))ds+\intot(f(u_i(s)),\phi(s))ds+\intot(\phi(s),dL(s)),
		\end{equation}
		$\hat{P}-\text{a.s.}$, $i=1,2.$ By the c\`adl\`ag property, equation (\ref{heatequation}) holds for each $t\in[0,T],$ almost surely. Therefore, $u_1=u_2$ due to the uniqueness of stochastic heat equation. Therefore, $(\ube)$ converges in probability to some $\bar{U}$ in $\mathbb{D}$ due to\cite{GK96}. But by the same argument we derive that $\bar{U}$ again satisfies (\ref{heatequation}), and the proof of part (i) of Theorem \ref{mainresult} is finished.
		%\\ \hspace*{\fill} \\
		%\noindent\emph{Proof of Theorem \ref{mainresult} (ii):}
		
		Now we turn to prove part (ii). Fix a test function $\psi\in H_0^2(D)$ such that $||\psi||_2=1.$ From Definition \ref{defofsolution}, 
		\begin{equation*}
			(\bube(t),\psi)-(u_0,\psi)=\intot(\bube(s),\Delta\psi)ds+\intot(f(\bube(s)),\psi)ds+\ep^\theta\sum_{k=1}^{\infty}(\psi,e_k)L_k(t).
		\end{equation*}
		From (\ref{generalestimateforR}) we see that
		\begin{equation*}
			(\ube(t),\psi)-(u_0,\psi)=\intot(\ube(s),\Delta\psi)ds+\intot(f(\ube(s)),\psi)ds+\ep^\theta\sum_{k=1}^{\infty}(\psi,e_k)L_k(t)+R^{\ep,\psi}(t),
		\end{equation*}
		where
		\begin{equation}\label{defofReppsi}
			R^{\ep,\psi}(t)=\ep^{-1}\intot(\bvbe_1(s),\psi)ds-\ep(\bvbe_2(t),\psi)+\ep^\theta\Big[\ep^{\frac{1}{\al}-1}\intot(\bvbe_3(s),\psi)ds-(\psi,L(t))\Big].
		\end{equation}
		From the two equalities above,
		$$(\ube(t)-\bube(t),\psi)=\intot(\ube(s)-\bube(s),\Delta\psi)ds+\intot(f(\ube(s))-f(\bube(s)),\psi)ds+R^{\ep,\psi}(t).$$
		Since $f$ is Lipschitz,
		\begin{eqnarray*}
			&&|(\ube(t)-\bube(t),\psi)|\\&\leq&|R^{\ep,\psi}(t)|+\intot||\ube(s)-\bube(s)||_{-2}||\psi||_2ds+\intot||f(\ube(s))-f(\bube(s))||_{-2}||\psi||_2ds\\&\lesssim&|R^{\ep,\psi}(t)|+\intot||\ube(s)-\bube(s)||_{-2}ds.
		\end{eqnarray*}
		Taking supremum,
		\begin{eqnarray*}
			&&\supt\sup\limits_{||\psi||_2=1}|(\ube(t)-\bube(t),\psi)|\\&\lesssim&\supt\sup\limits_{||\psi||_2=1}|R^{\ep,\psi}(t)|+\int_{0}^{T}||\ube(s)-\bube(s)||_{-2}||\psi||_2ds\\&\leq&\supt\sup\limits_{||\psi||_2=1}|R^{\ep,\psi}(t)|+\int_{0}^{T}\sup\limits_{0\leq s\leq t}||\ube(s)-\bube(s)||_{-2}dt.
		\end{eqnarray*}
		By Fubini theorem,
		$$\mathbb{E}\supt||\ube(t)-\bube(t)||_{-2}\lesssim\mathbb{E}\supt\sup\limits_{||\psi||_2=1}|R^{\ep,\psi}(t)|+\int_{0}^{T}\mathbb{E}\sup\limits_{0\leq s\leq t}||\ube(s)-\bube(s)||_{-2}dt.$$
		Our proof is finished by Gronwall inequality if we can prove
		\begin{equation}\label{star}
			\mathbb{E}\supt\sup\limits_{||\psi||_2=1}|R^{\ep,\psi}(t)|\lesssim\ep^\theta.
		\end{equation}
		Now we prove (\ref{star}). Indeed, by the same procedure leading to (\ref{noname}) and (\ref{noname1}), we derive that
		\begin{equation}\label{star1}
			\mathbb{E}\supt\sup\limits_{||\psi||_2=1}\Big|\ep^{-1}\intot(\bvbe_1(s),\psi)ds\Big|\lesssim\ep,
		\end{equation}
		and
		\begin{equation}\label{star2}
			\mathbb{E}\supt\sup\limits_{||\psi||_2=1}|(\bvbe_2(t),\psi)|\lesssim1.
		\end{equation}
		By (\ref{bveq}),
		$$(\bvbe_3(t),\psi)=-\ep^{-1}\intot(\bvbe_3(s),\psi)ds+\ep^{-\frac{1}{\al}}(L(t),\psi).$$
		Multiplying $\ep^{\frac{1}{\al}}$ and rearranging,
		$$\Big|\ep^{\frac{1}{\al}-1}\intot(\bvbe_3(s),\psi)ds-(\psi,L(t))\Big|=|(\ep^{\frac{1}{\al}}\bvbe_3(t),\psi)|.$$
		But from (\ref{esforrho}) and assumption ($\mathbf{A_1}$), $$\mathbb{E}\supt||\ep^{\frac{1}{\al}}\bvbe_3(t)||\lesssim1,$$
		so that
		\begin{eqnarray}\label{star3}
			&&\mathbb{E}\supt\sup\limits_{||\psi||_2=1}\Big|\ep^{\frac{1}{\al}-1}\intot(\bvbe_3(s),\psi)ds-(\psi,L(t))\Big|\nonumber\\&\leq&\mathbb{E}\supt\sup\limits_{||\psi||_2=1}(||\ep^{\frac{1}{\al}}\bvbe_3(t)||\cdot||\psi||)\nonumber\\&\leq&\mathbb{E}\supt||\ep^{\frac{1}{\al}}\bvbe_3(t)||\nonumber\\&\lesssim&1.
		\end{eqnarray}
	(\ref{star}) is proved by substituting (\ref{star1}), (\ref{star2}) and (\ref{star3}) into (\ref{defofReppsi}). The proof of Theorem \ref{mainresult} is finished.\qed
		\appendix
		\section{Appendix}\label{appen}
		In this appendix, $\ep$ is fixed, so we drop the superscript $\ep,$ and we write $x\lesssim y$ to imply that $x\leq Cy$ for some constant $C$ which may depend on $\ep$.
		
		\noindent\emph{Proof of Proposition \ref{wellposedness}:}

		    We only prove the proposition for equation (\ref{wave}), and the proof for equation (\ref{limitequation}) is similar and easier. Without loss of generality, we assume that $\theta=0$. As in the proof of Proposition \ref{Hnormforub}, we firstly consider the linear part, that is 
		   \begin{equation}\label{linearpart}
		   	\begin{cases}
		   		\ep u_{tt}+u_t=\Delta u+\dot{L},\\
		   		u(0)=0, u_t(0)=0.
		   	\end{cases}
		   \end{equation}  
		   We claim that the solution of (\ref{linearpart})  in the sense of (\ref{defofSWE}) is existent, so that for each $\phi\in C^1([0,T]\times D)$ with $\phi|_{\pa D}=0,$ the following equalities hold in the sense that they are indistinguishable:
		   \begin{equation}\label{weaklienarpart}
		   	\begin{cases}
		   		(u(t),\phi(t))=\intot (u(s),\phi_t(s))+(v(s),\phi(s))ds,\\
		   		(v(t),\phi(t))=\frac{1}{\ep}\Big[\intot-(v(s),\phi(s))-(\nabla u(s),\nabla\phi(s))\Big]+\intot(v(s),\phi_t(s))ds\\+\frac{1}{\ep}\sum\limits_{k=1}^{\infty}\lambda_k\intot\phi_k(s)dL_k(s),
		   	\end{cases}
		   \end{equation}
		   where $\phi_k:=(\phi,e_k).$ 
		   
		   Indeed, the linearity implies that the solution of (\ref{linearpart}), if exists, is unique. Besides, we can construct its solution by summing up all its component. For each $k\in\mathbb{N}$, consider the $k$-th component of (\ref{linearpart})
		   \begin{equation}\label{kthcomponent}
		   	\begin{cases}
		   		\dot{u}_k=v_k,\enspace u_k(0)=0,\\
		   		\dot{v}_k=\frac{1}{\ep}(-v_k-\al_ku_k)+\ep^{-1}\lambda_k\dot{L}_k,\enspace v_k(0)=0.
		   	\end{cases}
		   \end{equation}
		   By \cite[Theorem 6.2.9]{APPL09}, equations (\ref{kthcomponent}) admit a unique strong solution, which is predictable and c\`adl\`ag. Set 
		   \begin{equation}\label{constructionofu}
		   	u:=\sum_{k=1}^{\infty}u_ke_k,\enspace v:=\sum_{k=1}^{\infty}v_ke_k.
		   \end{equation}
		   We claim that the first series above converges in $H_0^1(D)$ uniformly on $[0,T]$ almost surely, and the second one converges uniformly on $[0,T]$ in $L^2(D)$ almost surely. Indeed, by Lemma \ref{H-1norm} and $(\mathbf{A_1})$,
		   $$\sum\limits_{k=1}^{\infty}\mathbb{E}\supt||u_k(t)e_k||_1=\sum\limits_{k=1}^{\infty}\al_k^{1/2}\mathbb{E}\supt|u_k(t)|<\infty,$$ so 
		   $$\sum\limits_{k=1}^{\infty}\supt||u_k(t)e_k||_1<\infty, \enspace\text{a.s.}.$$
		   By Cauchy principle, for $\delta>0,$ there exists a positive integer $N$, such that for all $m,n>N,$ $$\sum\limits_{k=m}^{n}\supt||u_k(t)e_k||_1<\delta,\enspace\text{a.s.},$$ so
		   $$\supt\sum\limits_{k=m}^{n}||u_k(t)e_k||_1\leq\sum\limits_{k=m}^{n}\supt||u_k(t)e_k||_1<\delta,\enspace\text{a.s.}.$$
		   By Cauchy principle again, we conclude that $\sum\limits_{k=1}^{\infty}u_k(\cdot)e_k$ converges in $H_0^1(D)$ uniformly on $[0,T],$ a.s.. By a similar argument using Cauchy principle, in order to show the uniform convergence of the series $\sum\limits_{k=1}^{\infty}v_k(\cdot)e_k,$ it suffices to prove $$\sum\limits_{k=1}^{\infty}\mathbb{E}\supt||v_k(t)e_k||=\sum\limits_{k=1}^{\infty}\mathbb{E}\supt|v_k(t)|<\infty.$$
		    Recall from (\ref{compodecompo2}) that $$v_k=w^k_1+\ep^{\theta+\frac{1}{\al}-1}w^k_2.$$
		    Applying Lemma \ref{H1norm}, for each $0\leq t\leq T,$
		    \begin{eqnarray*}
		    	&&|w^k_1(t)|\\&=&\Big|-\ep^{-1}e^{-\ep^{-1}t}\intot\al_k e^{\ep^{-1}s}u_k(s)ds\Big|\\&\leq&\ep^{-1}e^{-\ep^{-1}t}\intot\al_k e^{\ep^{-1}s}|u_k(s)|ds\\&\leq&\ep^{-1}\intot\al_k|u_k(s)|ds\\&\leq&\ep^{-1}\al_k\int_0^T|u_k(s)|ds,
		    \end{eqnarray*}
		    so $$\mathbb{E}\supt|w^k_1(t)|\leq\ep^{-1}\al_k\int_0^T\mathbb{E}|u_k(s)|ds\lesssim\al_k^{1/2}\lambda_k.$$
		    Recall that $w^k_2=\ep^{-\frac{1}{\al}}w^k_3,$ so by (\ref{esforrho}),
		    $$\mathbb{E}\supt|w^k_2(t)|\lesssim\frac{1}{k^{1+\gamma}}+\lambda_k+k^{1+\gamma}\lambda_k^2.$$
		    By the two estimates above, we conclude that
		    $$\mathbb{E}\supt|v_k(t)|\lesssim\al_k^{1/2}\lambda_k+\frac{1}{k^{1+\gamma}}+\lambda_k+k^{1+\gamma}\lambda_k^2.$$ As a consequence of ($\mathbf{A_1}$),
		    $$\sum\limits_{k=1}^{\infty}\mathbb{E}\supt|v_k(t)|<\infty,$$ so that the uniform convergence of $\sum\limits_{k=1}^{\infty}v_k(\cdot)e_k$ in $L^2(D)$ on $[0,T]$ follows.

		   Now we verify that $(u,v)$ constructed above is indeed the weak solution of (\ref{wave}). Applying integration-by-part formula,
		   \begin{eqnarray}\label{finiteweakforv}
		   	&&v_k(t)\phi_k(t)\nonumber\\&=&\intot v_k(s)\dot{\phi}_k(t)ds-\frac{1}{\ep}\intot\phi_k(s)v_k(s)ds\nonumber\\&&-\frac{\al_k}{\ep}\intot\phi_k(s)u_k(s)ds+\frac{1}{\ep}\lambda_k\intot\phi_k(s)dL_k(s).
		   \end{eqnarray}
		   Summing up with respect to $k$ and applying Parseval identity and dominated convergence theorem, 
		   \begin{eqnarray}\label{weakforv}
		   	&&(v(t),\phi(t))\nonumber\\&=&\intot(v(s),\phi_t(s))ds-\frac{1}{\ep}\intot(v(s),\phi(s))ds\nonumber\\&&-\frac{1}{\ep}\intot(\nabla u(s),\nabla\phi(s))ds+\frac{1}{\ep}\sum_{k=1}^{\infty}\lambda_k\intot\phi_k(s)dL_k(s).
		   \end{eqnarray}
		   By the same argument, one verifies
		   \begin{eqnarray}\label{weakforu}
		   	(u(t),\phi(t))=\intot(v(s),\phi(s))+(u(s),\phi_t(s))ds.
		   \end{eqnarray}
		   (\ref{weakforv}) and (\ref{weakforu}) together imply that $(u,v)$ is a solution of the linear SWE (\ref{linearpart}). 
		   
		   It remains to show that the trajectory of $(u,v)$ belongs to $\mathbb{C}([0,T];H_0^1(D))\times\mathbb{D}([0,T]\times L^2(D)),$ but this follows from (\ref{constructionofu}) and the following classical lemma, in conjunction with the observation that $u_k(\cdot)$ is continuous since it is the Lebesgue integral of $v_k(\cdot).$
		   \begin{lemma}\label{realanalysis}
		   	Let $(E,d)$ be a complete metric space and suppose that $(f_k)_{k\in\mathbb{N}}$ is a sequence of $E$-valued continuous (c\`{a}dl\`{a}g) function defined on $[0,T]$. If $f_k$ converges to an $E$-valued function $f$ uniformly on $[0,T]$, then $f$ is also continuous (c\`{a}dl\`{a}g).
		   \end{lemma}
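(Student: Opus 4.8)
The plan is to run the classical $\varepsilon/3$ argument, handling the continuous and the c\`adl\`ag cases in parallel. First I would establish right-continuity of $f$ at an arbitrary $s\in[0,T)$: given $\varepsilon>0$, use uniform convergence to pick $N$ with $\sup_{t\in[0,T]}d(f_N(t),f(t))<\varepsilon/3$, and use right-continuity of $f_N$ at $s$ to pick $\delta>0$ with $d(f_N(t),f_N(s))<\varepsilon/3$ for all $t\in[s,s+\delta)$. The triangle inequality
$$d(f(t),f(s))\le d(f(t),f_N(t))+d(f_N(t),f_N(s))+d(f_N(s),f(s))<\varepsilon$$
then shows $f$ is right-continuous at $s$. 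In the continuous case the same estimate with two-sided neighbourhoods gives continuity, so that part is done.

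For the c\`adl\`ag case it remains to produce left limits. I would fix $s\in(0,T]$, take an arbitrary increasing sequence $t_n\uparrow s$, and show $(f(t_n))_{n}$ is Cauchy in $E$. For $m,n$ large and $N$ as above,
$$d(f(t_n),f(t_m))\le d(f(t_n),f_N(t_n))+d(f_N(t_n),f_N(t_m))+d(f_N(t_m),f(t_m));$$
the outer terms are each $<\varepsilon/3$ by uniform convergence, and the middle term is $<\varepsilon/3$ once $m,n$ are large, because $f_N$ has a left limit at $s$, so $(f_N(t_n))_n$ converges and is Cauchy. By completeness of $E$, $f(t_n)$ converges to some $\ell$; interleaving two approaching sequences into a single increasing sequence and applying the same estimate shows $\ell$ is independent of the chosen sequence, so $f(s-):=\lim_{t\uparrow s}f(t)$ exists. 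Together with right-continuity this proves $f$ is c\`adl\`ag.

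The only mildly delicate point is the use of completeness to extract the left limit: uniform convergence transfers the Cauchy property of the net $t\uparrow s$ from each approximant $f_N$ to $f$, but since $f$ is not yet known to possess a left limit we must invoke completeness of $E$ rather than argue directly; everything else is routine $\varepsilon/3$ bookkeeping.
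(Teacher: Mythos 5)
Your $\varepsilon/3$ argument is correct and complete: right-continuity (resp.\ continuity) transfers by the standard triangle-inequality estimate, and the existence of left limits is properly reduced to the Cauchy property of $f$ along $t\uparrow s$, which is exactly where completeness of $E$ is needed and is the only non-routine point. The paper itself states this lemma without proof, calling it classical, so there is nothing to compare against; your write-up is the standard argument and fills that gap correctly.
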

		   
		   Secondly, we solve the pathwise deterministic PDE
		   \begin{equation}\label{pdpde}
		   	\begin{cases}
		   		\ep\rho_{tt}+\rho_t=\Delta\rho+f(u+\rho),\\
		   		\rho(0)=u_0,\enspace \rho_t(0)=v_0.
		   	\end{cases}
		   \end{equation}
		 Set $\rho_t=\xi.$ By a standard Faedo-Galerkin approximation \cite[page 224]{TEMAM}, the above equation has a unique weak solution $(\rho,\xi)\in\mathbb{C}([0,T];H_0^1(D))\times\mathbb{C}([0,T];L^2(D))$, so that
		  \begin{equation}\label{weakpdpde}
		  	\begin{cases}
		  		(\rho(t),\phi(t))-(u_0,\phi(0))=\intot(\rho(s),\phi_t(s))+(\xi(s),\phi(s))ds,\\
		  		(\xi(t),\phi(t))-(v_0,\phi(0))=\frac{1}{\ep}\Big[\intot-(\xi(s),\phi(s))-(\nabla\rho(s),\nabla\phi(s))+(f(u(s)+\rho(s)),\phi(s))ds\Big]\\+\intot(\xi(s),\phi_t(s))ds.
		  	\end{cases}
		  \end{equation}
		  Define $U:=u+\rho,\enspace V:=v+\xi.$ Combining (\ref{weaklienarpart}) and (\ref{weakpdpde}), we see that $(U,V)$ is a weak solution of (\ref{wave}) and that $(U,V)\in\mathbb{C}([0,T];H_0^1(D))\times \mathbb{D}([0,T];L^2(D)).$ The uniqueness of $(U,V)$ follows from the uniqueness of $(u,v)$ and $(\rho,\xi)$.\qed

		\bibliographystyle{abbrv}
		\addcontentsline{toc}{section}{References}
		\bibliography{bibli}	

\begin{thebibliography}{10}

\bibitem{APPL09}
D.~Applebaum.
\newblock {\em Levy {Processes} and {Stochastic} {Calculus}}, volume 116.
\newblock Cambridge Studies in Advanced Mathematics, 2009.

\bibitem{AR10}
D.~Applebaum and M.~Riedle.
\newblock Cylindrical {L}{\'e}vy processes in {B}anach spaces.
\newblock {\em Proceedings of the London Mathematical Society},
  101(3):697--726, 2010.

\bibitem{finance}
O.~E. Barndorff-Nielsen and N.~Shephard.
\newblock Modelling by {L}{\'e}vy processess for financial econometrics.
\newblock {\em Levy processes: theory and applications}, pages 283--318, 2001.

\bibitem{BIL13}
P.~Billingsley.
\newblock {\em Convergence of {Probability} {Measures}}.
\newblock John Wiley \& Sons, 2013.

\bibitem{BOD23}
G.~Bod{\'o}.
\newblock {\em Stochastic Analysis for Cylindrical L{\'e}vy Processes}.
\newblock PhD thesis, King's College London, 2023.

\bibitem{PZ10}
Z.~Brze{\'z}niak, B.~Goldys, P.~Imkeller, S.~Peszat, E.~Priola, and J.~Zabczyk.
\newblock Time irregularity of generalized {Ornstein}--{Uhlenbeck} processes.
\newblock {\em Comptes Rendus. Math{\'e}matique}, 348(5-6):273--276, 2010.

\bibitem{LLB}
Z.~Brze{\'z}niak, B.~Goldys, and K.~N. Le.
\newblock Existence of a unique solution and invariant measures for the
  stochastic {L}andau--{L}ifshitz--{B}loch equation.
\newblock {\em Journal of Differential Equations}, 269(11):9471--9507, 2020.

\bibitem{CF06A}
S.~Cerrai and M.~Freidlin.
\newblock On the {Smoluchowski}--{Kramers} approximation for a system with an
  infinite number of degrees of freedom.
\newblock {\em Probability Theory and Related Fields}, 135:363--394, 2006.

\bibitem{CF06B}
S.~Cerrai and M.~Freidlin.
\newblock {Smoluchowski}--{Kramers} approximation for a general class of
  {SPDEs}.
\newblock {\em Journal of Evolution Equations}, 6(4):657--689, 2006.

\bibitem{DW14}
J.~Duan and W.~Wang.
\newblock {\em Effective {D}ynamics of {S}tochastic {P}artial {D}ifferential
  {E}quations}.
\newblock Elsevier, 2014.

\bibitem{epidemics}
B.~Dybiec, A.~Kleczkowski, and C.~A. Gilligan.
\newblock Modelling control of epidemics spreading by long-range interactions.
\newblock {\em Journal of the Royal Society Interface}, 6(39):941--950, 2009.

\bibitem{EK09}
S.~N. Ethier and T.~G. Kurtz.
\newblock {\em Markov {P}rocesses: {C}haracterization and {C}onvergence}.
\newblock John Wiley \& Sons, 2009.

\bibitem{GK96}
I.~Gy{\"o}ngy and N.~Krylov.
\newblock Existence of strong solutions for {I}t{\^o}'s stochastic equations
  via approximations.
\newblock {\em Probability Theory and Related Fields}, 105(2):143--158, 1996.

\bibitem{HOTT13}
S.~Hottovy.
\newblock {\em The {Smoluchowski}--{Kramers} approximation for stochastic
  differential equations with arbitrary state dependent friction}.
\newblock PhD thesis, The University of Arizona, 2013.

\bibitem{HMV15}
S.~Hottovy, A.~McDaniel, G.~Volpe, and J.~Wehr.
\newblock The {Smoluchowski}--{Kramers} limit of stochastic differential
  equations with arbitrary state-dependent friction.
\newblock {\em Communications in Mathematical Physics}, 336:1259--1283, 2015.

\bibitem{JR17}
A.~Jakubowski and M.~Riedle.
\newblock Stochastic integration with respect to cylindrical {L}{\'e}vy
  processes.
\newblock {\em The Annals of Probability}, 45(6B):4273--4306, 2017.

\bibitem{KALL97}
O.~Kallenberg.
\newblock {\em Foundations of {Modern} {Probability}}, volume~2.
\newblock Springer, 1997.

\bibitem{SATO99}
S.~Ken-Iti.
\newblock {\em L{\'e}vy {Processes} and {Infinitely} {Divisible}
  {Distributions}}, volume~68.
\newblock Cambridge University Press, 1999.

\bibitem{KOS20}
T.~Kosmala.
\newblock {\em Stochastic Partial Differential Equations Driven by Cylindrical
  L{\'e}vy Processes}.
\newblock PhD thesis, King's College London, 2020.

\bibitem{KR22}
T.~Kosmala and M.~Riedle.
\newblock Stochastic evolution equations driven by cylindrical stable noise.
\newblock {\em Stochastic Processes and their Applications}, 149:278--307,
  2022.

\bibitem{KRAM40}
H.~A. Kramers.
\newblock Brownian motion in a field of force and the diffusion model of
  chemical reactions.
\newblock {\em Physica}, 7(4):284--304, 1940.

\bibitem{peszat2007stochastic}
S.~Peszat and J.~Zabczyk.
\newblock {\em Stochastic {Partial} {Differential} {Equations} with {L{\'e}vy}
  {Noise}: {An} {Evolution} {Equation} {Approach}}, volume 113.
\newblock Cambridge University Press, 2007.

\bibitem{PZ11}
E.~Priola and J.~Zabczyk.
\newblock Structural properties of semilinear {SPDEs} driven by cylindrical
  stable processes.
\newblock {\em Probability theory and related fields}, 149(1):97--137, 2011.

\bibitem{PRO05}
P.~E. Protter.
\newblock {\em Stochastic {Integration} and {Differential} {Equations}}.
\newblock Springer, 2005.

\bibitem{RW06}
J.~Rosinski and W.~Woyczynski.
\newblock Moment inequalities for real and vector p-stable stochastic
  integrals.
\newblock In {\em Probability in Banach Spaces V: Proceedings of the
  International Conference held in Medford, USA, July 16--27, 1984}, pages
  369--386. Springer, 2006.

\bibitem{TAQQU}
G.~Samorodnitsky, M.~S. Taqqu, and R.~Linde.
\newblock {\em Stable non-{G}aussian {R}andom {P}rocesses: {S}tochastic
  {M}odels with {I}nfinite {V}ariance}, volume~28.
\newblock [London: London Mathematical Society], 1969-, 1996.

\bibitem{SMOL16}
M.~v. Smoluchowski.
\newblock Drei vortrage uber diffusion, brownsche bewegung und koagulation von
  kolloidteilchen.
\newblock {\em Zeitschrift fur Physik}, 17:557--585, 1916.

\bibitem{TEMAM}
R.~Temam.
\newblock {\em Infinite-Dimensional Dynamical Systems in Mechanics and
  Physics}, volume~68.
\newblock Springer Science \& Business Media, 2012.

\bibitem{SWW24}
M.~Wang, D.~Su, and W.~Wang.
\newblock Averaging on macroscopic scales with application to
  {Smoluchowski}--{Kramers} approximation.
\newblock {\em Journal of Statistical Physics}, 191(2):22, 2024.

\bibitem{LRW11}
W.~Wang, Y.~Lv, and A.~Roberts.
\newblock On the approximation for singularly perturbed stochastic wave
  equations.
\newblock {\em arXiv preprint arXiv:1109.3000}, 2011.

\bibitem{chaos24}
Z.~Wang, L.~Lv, Y.~Zhang, J.~Duan, and W.~Wang.
\newblock Small mass limit for stochastic interacting particle systems with
  {L{\'e}vy} noise and linear alignment force.
\newblock {\em Chaos: An Interdisciplinary Journal of Nonlinear Science},
  34(2), 2024.

\bibitem{ZHAN08}
S.~Zhang.
\newblock {\em {Smoluchowski}--{Kramers} Approximation for Stochastic Equations
  with L{\'e}vy-Noise}.
\newblock PhD thesis, Purdue University, 2008.

\bibitem{ZHAO24}
Q.~Zhao and W.~Wang.
\newblock Convergence rate of {S}moluchowski--{K}ramers approximation with
  stable {L}\'evy noise.
\newblock {\em arXiv preprint arXiv:2411.11552}, 2024.

\end{thebibliography}
	
\end{document}